\theoremstyle{plain}
\newtheorem{theorem}{Theorem}[section]
\newtheorem{definition}[theorem]{Definition}
\newtheorem{lemma}[theorem]{Lemma}
\newtheorem{proposition}[theorem]{Proposition}
\newtheorem{example}[theorem]{Example}
\newtheorem{remark}[theorem]{Remark}
\newtheorem{fact}[theorem]{Fact}
\newtheorem*{theorem*}{Theorem}
\theoremstyle{remark}
\DeclareMathOperator{\Aut}{Aut}
\DeclareMathOperator{\Th}{Th}
\DeclareMathOperator{\Part}{Part}
\DeclareMathOperator{\Age}{Age}
\DeclareMathOperator{\dom}{dom}
\DeclareMathOperator{\range}{range}
\DeclareMathOperator{\acl}{acl}
\renewcommand{\restriction}{\mathord{\upharpoonright}}
\title{Bowtie-free graphs and generic automorphisms}
\author{Daoud Siniora}
\address{Department of Mathematics and Actuarial Science, The American University in Cairo}
\email{daoud.siniora@aucegypt.edu}
\begin{document}

\maketitle

\begin{abstract}       
We show that the countable universal $\omega$-categorical bowtie-free graph admits generic automorphisms. Moreover, we show that this graph is not finitely homogenisable.
\end{abstract}

\section{Introduction}

In this paper we examine the automorphism group of the universal $\omega$-categorical bowtie-free graph. This graph appears in the study of the question of existence of countably infinite universal graphs which forbid finitely many finite graphs as subgraphs, rather than induced subgraphs. The first examples of such universal graphs include the random graph and the universal homogeneous $K_n$-free graph for $n\geq 3$. We focus here on the case of the bowtie-free universal graph, where a bowtie ($\bowtie$) is the graph consisting of two triangles glued at one common vertex. A bowtie-free universal graph was first proved to exist by Komj\'{a}th \cite{komjath}, a result which was not attainable via the Fra\"{i}ss\'{e} amalgamation technique at the time. Such an obstacle provided the motivation behind the combinatorial theory developed by Cherlin, Shelah, and Shi \cite{cherlinshelahshi} which established the existence of an $\omega$-categorical universal bowtie-free graph (and other universal graphs) using the algebraic closure operator. We will denote this universal bowtie-free graph by $\mathcal{U}_{\bowtie}$.  
The Cherlin-Shelah-Shi theory and the uniqueness of $\mathcal{U}_{\bowtie}$ is discussed in Section \ref{CSSsection}. Hubi\v{c}ka and Ne\v{s}et\v{r}il \cite{hubickanesetril} were also interested in the universal bowtie-free graph as it serves as an interesting example which lies in the intersection of Ramsey theory and model theory. 

Let $M$ be a countably infinite first-order structure. We say that $M$ is \textit{homogeneous} if every isomorphism between finitely generated substructures
of $M$ extends to an automorphism of $M$. The automorphism group $\Aut(M)$ of $M$ is endowed
with the pointwise convergence topology which makes it a Polish group.  With respect to this topology, the structure $M$ is said to have \textit{generic automorphisms} if $\Aut(M)$ contains a comeagre conjugacy class — see Truss \cite{truss}. 

We motivate the significance of generic automorphisms by mentioning some of their group-theoretic consequences. Suppose that $G$ is a Polish group with a comeagre conjugacy class C. Then, we have that $G = C^2 = \{gh \mid g, h \in C\}$, and every element of $G$ is a commutator. Thus, $G = G'$ where $G'$ is the commutator subgroup. Moreover, if $G$ is uncountable, then $G$ has no proper normal subgroup of
countable index. See Macpherson \cite[Proposition 4.2.12]{macphersonsurvey}. Furthermore, we have the
following.

\begin{theorem*}[Macpherson-Thomas \cite{macphersonthomas}] Suppose that $G$ is a Polish group with a comeagre conjugacy class. Then $G$ is not a non-trivial free product with amalgamation.
\end{theorem*}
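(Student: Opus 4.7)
Suppose for contradiction that $G = A \ast_C B$ is a non-trivial amalgamated free product, so $A \neq C \neq B$, and let $K$ be the comeagre conjugacy class of $G$. The plan is to exhibit $K$ as meagre in $G$, which is absurd.

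The main tool is the action of $G$ on its Bass-Serre tree $T$: vertex stabilisers are conjugate to $A$ or $B$, edge stabilisers to $C$, and every $g \in G$ acts as either an elliptic isometry (fixing a vertex, equivalently lying in a conjugate of $A$ or $B$) or a hyperbolic isometry with translation length $\ell(g) > 0$. Since $\ell$ is invariant under conjugation, it is constant on $K$, and I would split into cases according to whether the generic element of $K$ is elliptic or hyperbolic.

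In the elliptic case, $K \subseteq \bigcup_{h \in G} (hAh^{-1} \cup hBh^{-1})$, so $A$ or $B$ is non-meagre in $G$. Assuming $A$ has the Baire property, Pettis's theorem gives $A A^{-1} = A$ non-empty interior, so $A$ is open, hence of countable index in the separable group $G$; the normal core of $A$ is then a proper normal subgroup of countable index, contradicting the paper's earlier remark that no such subgroup exists (after reducing to the uncountable case). In the hyperbolic case, cyclically-reduced normal forms in $A \ast_C B$ place elements of a fixed translation length in a product set of the form $(A \cup B)^N$ for some $N$, and one aims for a similar contradiction.

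The main obstacle is twofold. First, the set $(A \cup B)^N$ arising in the hyperbolic case need not be meagre even when $A, B$ are, so a crude syllable-counting argument will not do; a finer invariant is needed, such as the fact that the centraliser of a hyperbolic element is contained in the setwise stabiliser of its axis (virtually cyclic classically), so that $K$ is in bijection with the topologically very large coset space $G/C_G(g)$. Second, since the amalgamation $G = A \ast_C B$ is purely algebraic, one must justify that $A, B, C$ have the Baire property in $G$; this may require bypassing the subgroups themselves in favour of a direct Baire-category analysis of the syllable-length strata.
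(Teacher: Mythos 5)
This theorem is quoted in the paper from Macpherson--Thomas \cite{macphersonthomas} purely as motivation; the paper contains no proof of it, so your proposal can only be judged on its own terms. As written it has genuine gaps, the most serious of which you do not flag. In the elliptic case you pass from $K \subseteq \bigcup_{h \in G}\bigl(hAh^{-1} \cup hBh^{-1}\bigr)$ to ``$A$ or $B$ is non-meagre''. That union is indexed by all of $G$, hence is uncountable in the relevant cases, and a comeagre set can perfectly well be covered by uncountably many meagre sets; Baire category gives you nothing here unless you already know the index of $A$ is countable, which is close to what you are trying to prove. The second gap you do acknowledge, but it is fatal to the whole architecture: since the splitting $G = A \ast_C B$ is purely algebraic, there is no reason for $A$, $B$, $C$, or any of the syllable-length strata to have the Baire property in $G$ (compare non-measurable subgroups of $(\mathbb{R},+)$), so Pettis's theorem is not available and a ``direct Baire-category analysis of the syllable-length strata'' faces exactly the same obstruction. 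Finally, the hyperbolic case is not an argument at all but a statement of intent.

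The actual Macpherson--Thomas proof sidesteps every one of these issues by never asking whether any algebraically defined subset is large or small. The only topological input is that finitely many left translates $g_1K, \ldots, g_nK$ of the comeagre class $K$ have a common point, so for well-chosen $g_i$ (built from elements $a \in A \setminus C$ and $b \in B \setminus C$) one obtains a single $x$ with $x, g_1^{-1}x, \ldots, g_n^{-1}x$ all conjugate. The contradiction is then purely combinatorial, using normal forms in $A \ast_C B$ (equivalently, translation length on the Bass--Serre tree, which is a conjugacy invariant, together with the classification of conjugacy between cyclically reduced words): one shows these translates cannot all lie in one conjugacy class. If you want to salvage your plan, replace the attempt to locate $K$ inside meagre or non-meagre algebraic strata with this ``finite intersection of translates'' device, and do the case analysis (elliptic versus hyperbolic, i.e.\ conjugate into $A \cup B$ versus cyclically reduced length at least $2$) on the single element $x$ so produced rather than on the whole class.
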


In Section \ref{bowtieFAPsection} we extend an amalgamation lemma in \cite{hubickanesetril} regarding a cofinal subclass
of the class of all finite bowtie-free graphs. Consequently, via a variation of Fraïssé’s
amalgamation technique, we obtain a universal bowtie-free graph isomorphic to the Cherlin-Shelah-Shi universal bowtie-free graph $\mathcal{U}_{\bowtie}$. Here is our main result (see Definition \ref{finitehomog} for the definition of finitely homogenisable). 

\begin{theorem*}
The universal $\omega$-categorical bowtie-free graph admits generic automorphisms. Moreover, it is not finitely homogenisable.
\end{theorem*}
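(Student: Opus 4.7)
To show that $\mathcal{U}_{\bowtie}$ admits generic automorphisms, the strategy is to apply the Kechris-Rosendal characterisation to the cofinal Fra\"{i}ss\'{e} subclass $\mathcal{C}$ of bowtie-free graphs identified in Section~\ref{bowtieFAPsection}, whose generalised Fra\"{i}ss\'{e} limit is $\mathcal{U}_{\bowtie}$. Accordingly, I need to verify the joint embedding property (JEP) and the weak amalgamation property (WAP) for the class of $1$-systems over $\mathcal{C}$. For JEP, given $\langle A, p_A\rangle$ and $\langle B, p_B\rangle$, the disjoint union $A\sqcup B$ (with no edges added across) remains in $\mathcal{C}$ since it creates no new triangles and hence no bowties, and $p_A\cup p_B$ is a partial automorphism witnessing a common $1$-system into which both embed.

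The main obstacle is WAP. Given $\langle A, p\rangle$, I would first extend it to some $\langle A', p'\rangle\in\mathcal{C}$ in which $p'$ is a \emph{total} automorphism of $A'$; this is the extension property for partial automorphisms (EPPA) for $\mathcal{C}$. Once $p'$ is total, any two extensions $\langle B_1, q_1\rangle, \langle B_2, q_2\rangle$ of $\langle A', p'\rangle$ admit a common amalgam: form the free amalgam $C = B_1\oplus_{A'} B_2$, which lies in $\mathcal{C}$ by the free amalgamation property of Section~\ref{bowtieFAPsection}, and equip it with $q_1\cup q_2$. This union is well-defined because both $q_i$ agree with $p'$ on $A'$, and the totality of $p'$ forces each $q_i$ to map $B_i\setminus A'$ into itself (otherwise $q_i^{-1}$ would contradict the bijectivity of $p'$ on $A'$), so no edge is moved across the amalgam boundary, and edge-preservation in each $B_i$ then suffices. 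The hard step is thus EPPA for bowtie-free graphs in $\mathcal{C}$; I would tackle it by adapting Herwig-Lascar-type arguments within the Hubi\v{c}ka-Ne\v{s}et\v{r}il framework, which expresses bowtie-freeness as a forbidden-homomorphism condition compatible with EPPA constructions, taking care that the witnessing extension remains in the cofinal subclass.

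For the non-finite-homogenisability claim I would argue by contradiction. Suppose $\tilde{M}$ is a homogeneous expansion of $\mathcal{U}_{\bowtie}$ in a finite relational language $\tilde{\mathcal{L}}$ of maximum arity $k$, with $\Aut(\tilde{M}) = \Aut(\mathcal{U}_{\bowtie})$. Homogeneity then implies that the orbit of any finite tuple under $\Aut(\mathcal{U}_{\bowtie})$ is determined by the orbits of its sub-tuples of length at most $k$. My plan is to exhibit, for each $k$, a pair of tuples $\bar{a}, \bar{b}$ of length $n\gg k$ lying in distinct orbits but whose corresponding $k$-sub-tuples agree pairwise in orbit. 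The natural source of such pairs is the non-triviality of the algebraic closure operator in $\mathcal{U}_{\bowtie}$: the Cherlin-Shelah-Shi construction of $\mathcal{U}_{\bowtie}$ via $\acl$ forces long-range dependencies (encoded through chains of edge-sharing triangles, which are precisely what bowtie-freeness demands in place of merely vertex-sharing triangles) that no relation of bounded arity can record. Producing the explicit comparison pairs is the main combinatorial effort here, but the $\omega$-categoricity of $\mathcal{U}_{\bowtie}$ together with its non-trivial acl makes such configurations available and distinguishable only by examining sub-tuples of unbounded size.
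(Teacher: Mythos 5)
Your overall architecture (Kechris--Rosendal, disjoint unions for JEP, free amalgamation for the amalgamation step, and a $k$-subtuple orbit argument against finite homogenisability) matches the paper, but the load-bearing step of your WAP argument is a genuine gap. You propose to first make the partial automorphism total via EPPA for (a cofinal class of) bowtie-free graphs. Full EPPA for the class $\mathcal{C}^0_{\bowtie}$ of finite bowtie-free graphs is \emph{false}: as noted in Section~\ref{bowtieFAPsection}, Herwig's observation that JEP plus the extension property for a single partial automorphism implies the amalgamation property, combined with the failure of AP for $\mathcal{C}^0_{\bowtie}$, rules it out. For the cofinal subclass $\mathcal{C}^{sp}_{\bowtie}$ of special bowtie-free graphs, EPPA is posed as an open question at the end of the paper, so ``adapting Herwig--Lascar-type arguments'' is not a step you can wave at. The paper's proof needs much less: Proposition~\ref{necklaceargument} (the necklace argument) extends $p:U\to V$ to a total automorphism only under the hypothesis that $U$ and $V$ are themselves special bowtie-free graphs, and the point of WAP --- the reason ``weak'' suffices --- is that one first replaces $A$, $U$, $V$ by their algebraic closures in $\mathcal{U}_{\bowtie}$, which (after a slight enlargement) are special bowtie-free graphs by the description following Theorem~\ref{universalbowtiefreegraph}. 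Your proposal never uses the algebraic closure here, which is exactly the missing idea. A further technical point you gloss over: $\mathcal{U}_{\bowtie}$ is not homogeneous in the graph language (that is half of what the theorem asserts), and $\mathcal{C}^{sp}_{\bowtie}$ is not hereditary, so Kechris--Rosendal does not apply directly; the paper first passes to the Morleyisation to get a genuine homogeneous structure. Your amalgamation of two total extensions over a common total system via the free amalgam is correct and matches the paper.

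For non-finite-homogenisability, your strategy (tuples of length $k+1$ lying in distinct orbits whose $k$-subtuples agree in orbit, exploiting the nontrivial algebraic closure) is exactly the paper's, but the entire content lies in the explicit configuration you defer: the paper takes $k+1$ height-$2$ chimneys whose base vertices $a_1,\dots,a_{k+1}$ and $b_1,\dots,b_{k+1}$ each form a $(k+1)$-cycle, plus one extra chimney attached with a ``twist'' ($a_k E \hat{a}_{k+1}$, $\hat{a}_{k+1} E b_1$, $b_k E \hat{b}_{k+1}$, $\hat{b}_{k+1} E a_1$), and compares the tip-tuples $(t_1,\dots,t_{k+1})$ and $(t_1,\dots,t_k,\hat{t}_{k+1})$; the parity obstruction on where the last base pair can be sent is what yields the contradiction. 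Without producing such a configuration and verifying that all $k$-subtuples really are in the same orbit (which uses Theorem~\ref{universalbowtiefreegraph}(iii) applied to unions of the chimneys), this half of your argument is a plan rather than a proof.
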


%*******************************************************************************************************************
%*******************************************************************************************************************
%*******************************************************************************************************************

\section{Preliminaries} \label{CSSsection}

The technique we use to establish the existence of generic automorphisms for the universal bowtie-free graph is the \textit{Kechris-Rosendal characterisation} \cite[Theorem 3.4]{kechrisrosendal}. Suppose that $A$ is a first-order structure. A \textit{partial automorphism} of $A$ is an isomorphism
$p:U \to V$ where $U$ and $V$ are substructures of $A$. We denote by $\Part(A)$ the set of all
partial automorphisms of $A$. Suppose that $\mathcal{C}$ is a Fraïssé class of finite $\mathcal{L}$-structures over some first-order language $\mathcal{L}$. An $n$-system over $\mathcal{C}$ is a tuple $\langle A, p_1, \ldots , p_n\rangle$ where $A$ is in $\mathcal{C}$ and each
$p_i \in \Part(A)$. Let $S=\langle A,p \rangle$ and $S'=\langle B,f \rangle$ be two 1-systems over $\mathcal{C}$. An embedding from $S$ to $S'$ is an $\mathcal{L}$-embedding $\phi:A\to B$ such that $\phi(\dom(p))\subseteq \dom(f)$, and $\phi(\range(p))\subseteq \range(f)$, and $\phi \circ p \subseteq f \circ \phi$. We say that the class of all $1$-systems over $\mathcal{C}$ has the \textit{joint embedding property} if for every two $1$-systems $S_1$ and $S_2$, there is a $1$-system $R$ which embeds both $S_1$ and $S_2$. Furthermore, the class of all $1$-systems over $\mathcal{C}$ has the \textit{weak amalgamation property} if for every $1$-system $S$ there exists some $1$-system $\hat{S}$ and embedding $\iota:S\to \hat{S}$ such that whenever $S_1$ and $S_2$ are $1$-systems with embeddings $\alpha_1:\hat{S}\to S_1$ and $\alpha_2:\hat{S}\to S_2$, there is a $1$-system $R$ with embeddings $\beta_1:S_1\to R$ and $\beta_2:S_2\to R$ such that $\beta_1 \alpha_1 \iota = \beta_2 \alpha_2 \iota$. 

\begin{figure}[hp]
\begin{displaymath}
    \xymatrix{& & & S_1 \ar@{-->}[rd]^{\beta_1} \\    
           S\ar@/^1pc/[rrru]^{\alpha_1 \iota}\ar@/_1pc/[rrrd]_{\alpha_2 \iota}\ar@{-->}[rr]^{\iota}& &  \hat{S}\ar[ur]^{\alpha_1} \ar[dr]_{\alpha_2} & & R \\   
              & & & S_2 \ar@{-->}[ru]_{\beta_2}}
\end{displaymath}
\centering
\caption{Weak amalgamation property}
\end{figure}
Now, suppose that $M$ is the Fraïssé limit of $\mathcal{C}$. Then, $M$ has generic automorphisms if and only if the class of
1-systems over $\mathcal{C}$ has the joint embedding property and the weak amalgamation property. See \cite[Theorem 3.4]{kechrisrosendal}.

Next, we present a model theoretic approach developed in Cherlin-Shelah-Shi \cite{cherlinshelahshi} to the problem of existence of a universal graph with forbidden subgraphs. Let $\mathcal{F}$ be a family of finite graphs, viewed as `forbidden' graphs. A graph $G$ is called \textit{$\mathcal{F}$-free} if no graph in $\mathcal{F}$ is isomorphic to a subgraph of $G$ (not necessarily induced subgraph). That is, $G$ is $\mathcal{F}$-free if there exists no injective homomorphism from an element in $\mathcal{F}$ into $G$. Denote by $\mathcal{C}_\mathcal{F}$ the class of all countable (finite and countably infinite) $\mathcal{F}$-free graphs. A graph $G\in \mathcal{C}_\mathcal{F}$ is \textit{universal} for $\mathcal{C}_\mathcal{F}$ if every graph in $\mathcal{C}_\mathcal{F}$ is isomorphic to an \textit{induced} subgraph of $G$. For some graphs $G$ and $H$, by $G\subseteq H$ we mean that $G$ is an induced subgraph of $H$. 

\begin{figure}[hp]
\begin{tikzpicture}
\tikzset{Bullet/.style={circle,draw,fill=black,scale=0.5}}
\node[Bullet](a) at(0,0.5){};
\node[Bullet] (b)at(1.5,0){};
\node[Bullet] (d)at(-1.5,0){};
\node[Bullet] (c)at(1.5,1){};
\node[Bullet] (e)at(-1.5,1){};
\draw[thick](a)--(b)--(c)--(a)--(d)--(e)--(a) {};
\end{tikzpicture}
\centering
\caption{The bowtie graph}
\end{figure}

We collect below some results on the existence of countable universal graphs. Towards this, we first describe a graph generalising the bowtie graph. Given a collection $K_{n_1}, K_{n_2}, \ldots, K_{n_k}$ of complete graphs, their \textit{bouquet} $K_{n_1} + K_{n_2} + \ldots + K_{n_k}$ is the graph formed by taking the free amalgam of the given complete graphs over one common vertex. The bouquet $K_3 + K_3$ is the bowtie graph.

\begin{example}\rm Existence results.
\begin{enumerate}[label=(\roman*), itemsep=1pt,topsep=-3pt]
\item (Rado \cite{rado}). The class $\mathcal{C}_\emptyset$ of all countable graphs has a universal element. 

\item (Komj\'{a}th \cite{komjath}). There is a countable universal bowtie-free graph.

\item (Cherlin-Shi \cite{cherlinshi}). Suppose that $\mathcal{F}$ is a finite set of cycles. Then there is a countable universal $\mathcal{F}$-free graph if and only if $\mathcal{F}=\{C_3, C_5, C_7, \ldots, C_{2k+1}\}$ for some $k\geq 1$.   

\item (Cherlin-Tallgren \cite{cherlintallgren}). Let $F=K_m+K_n$ be a bouquet where $m\leq n$. Then there is a countable universal $F$-free graph if and only if $1\leq m \leq 5$ and $(m,n)\neq (5,5)$.
\end{enumerate}
\end{example}

Recall that a graph is \textit{$2$-connected} if it is connected, and remains connected after deleting any of its vertices. 

\begin{example}\rm Non-existence results.
\begin{enumerate}[label=(\roman*), itemsep=1pt,topsep=-3pt]
\item (Komj\'{a}th \cite{komjath}). Let $m,n \geq 3$. If $F=m\cdot K_n$, the bouquet of $m$-many copies of $K_n$, then there is no $F$-free countable universal graph.      

\item (Cherlin-Komj\'{a}th \cite{cherlinkomjath}). There is no countable universal $C_n$-free graph for $n\geq 4$. Here $C_n$ is a cycle of length $n$. (See item (iii) in the previous example.)

\item (F\"{u}redi-Komj\'{a}th \cite{furedikomjath}). If $F$ is a finite, $2$-connected, but not complete graph, then there is no countable universal $F$-free graph.
\end{enumerate}
\end{example}

We work with the language of graphs $\mathcal{L}=\{E\}$ where $E$ is a binary relation symbol. Denote by $T_\mathcal{F}$ the theory of the class $\mathcal{C}_\mathcal{F}$, that is, the theory $T_\mathcal{F}$ is the set of all $\mathcal{L}$-sentences true in all members of $\mathcal{C}_\mathcal{F}$. Note that $T_\mathcal{F}$ is equivalent to a universal theory.   

\begin{definition}\rm \cite[Definition 2]{cherlinshelahshi}
\begin{enumerate}[label=(\roman*), noitemsep,topsep=-3pt]
\item Let $H$ be a graph, and $G\subseteq H$ an induced subgraph. We say that $G$ is \textit{existentially closed} in $H$ if for every existential sentence $\exists\bar{x}\phi(\bar{x})$ with parameters from $G$ we have that if $H\models \exists\bar{x}\phi(\bar{x})$, then $G\models \exists\bar{x}\phi(\bar{x})$.  

\item A graph $G \in \mathcal{C}_\mathcal{F}$ is called \textit{existentially closed} in $\mathcal{C}_\mathcal{F}$ if $G$ is existentially closed in every graph $H\in \mathcal{C}_\mathcal{F}$ containing $G$ as a subgraph. 

\item Denote by $\mathcal{E}_\mathcal{F}$ the class of all existentially closed graphs in $\mathcal{C}_\mathcal{F}$. And let $T_\mathcal{F}^{ec}$ be the theory of the class $\mathcal{E}_\mathcal{F}$. 
\end{enumerate}
\end{definition}

\begin{remark}\rm
A graph $G\subseteq H$ being existentially closed in $H$ is equivalent to the following condition: if $A\subseteq B$ are finite graphs such that $A\subseteq G$ and $B\subseteq H$, then there is an embedding $f: B \to G$ such that $f\restriction_A$ is the identity map.
\end{remark}

The notions above apply to other settings. For example, the existentially closed structures in the class of fields are the algebraically closed fields. The existentially closed structures in the class of ordered fields are the real closed fields. Dense linear orders without endpoints are existentially closed in the class of linear orders. Existentially closed structures appear in model theory in Abraham Robinson's work on model complete theories---see \cite[Chapter 3]{marcjatoffalori}, \cite[Section 3.5]{changkeisler}, and \cite{hirschfeldwheeler}. A first-order theory $T$ is said to be \textit{model complete} if whenever $M, N\models T$ and $M\subseteq N$, then $M \preceq N$. Robinson's Test \cite[Theorem 3.2.1]{marcjatoffalori} states that the following are equivalent for an $\mathcal{L}$-theory $T$:

\begin{enumerate}[label=(\roman*), noitemsep,topsep=-3pt]
\item $T$ is model complete.

\item Whenever $M, N \models T$ with $M \subseteq N$, then $M$ is existentially closed in $N$.

\item Every $\mathcal{L}$-formula is equivalent to an existential formula modulo $T$.

\item Every $\mathcal{L}$-formula is equivalent to a universal formula modulo $T$.
\end{enumerate}

Suppose that $\mathcal{K}$ is an elementary class of $\mathcal{L}$-structures which is closed under unions of chains. Then every element $M\in \mathcal{K}$ can be extended to an element $\bar{M}\in \mathcal{K}$ which is existentially closed in $\mathcal{K}$ \cite[Lemma 3.5.7]{changkeisler}. Let $\mathcal{E}(\mathcal{K})$ be the subclass of all existentially closed structures in $\mathcal{K}$. Then $\mathcal{E}(\mathcal{K})$ may not be an elementary class. For instance, Eklof and Sabbagh proved that the class of existentially closed groups is not elementary \cite[Theorem 3.5.7]{marcjatoffalori}.    

\begin{proposition}{\rm \cite[Proposition 3.5.15]{changkeisler}} Let $\mathcal{K}$ be an elementary class of $\mathcal{L}$-structures closed under unions of chains. Let $T:=\Th(\mathcal{K})$ and $T^{ec}:=\Th(\mathcal{E}(\mathcal{K}))$. Then $T^{ec}$ is model complete if and only if $\mathcal{E}(\mathcal{K})$ is elementary.
\end{proposition}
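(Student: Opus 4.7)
The plan is to apply Robinson's Test (item (ii) in the list above) and to exploit that $\mathcal{E}(\mathcal{K}) \subseteq \mathrm{Mod}(T^{ec})$ holds by the very definition of $T^{ec}$. The real content of the proposition lies in whether the reverse inclusion is also forced (which is what ``$\mathcal{E}(\mathcal{K})$ elementary'' means) and whether substructures of models of $T^{ec}$ can always be promoted to elementary substructures (which is what ``$T^{ec}$ model complete'' means). These two jobs line up almost by design.

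For the direction ``elementary implies model complete'', I would assume $\mathcal{E}(\mathcal{K}) = \mathrm{Mod}(T^{ec})$ and take $M, N \models T^{ec}$ with $M \subseteq N$. Both then lie in $\mathcal{E}(\mathcal{K})$, and since $M$ is existentially closed in every superstructure drawn from $\mathcal{K}$, in particular in $N$. Robinson's Test immediately delivers model completeness of $T^{ec}$.

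For the converse, I would assume $T^{ec}$ is model complete and take an arbitrary $M \models T^{ec}$; the goal is $M \in \mathcal{E}(\mathcal{K})$. First, note that $T \subseteq T^{ec}$ because every member of $\mathcal{E}(\mathcal{K})$ belongs to $\mathcal{K}$ and hence satisfies $T$. Since $\mathcal{K}$ is elementary, $\mathcal{K} = \mathrm{Mod}(T)$, so $M \in \mathcal{K}$. To see that $M$ is existentially closed in $\mathcal{K}$, let $N \in \mathcal{K}$ with $M \subseteq N$ and invoke the extension lemma \cite[Lemma 3.5.7]{changkeisler} mentioned above---this is the point where closure under unions of chains enters---to produce $\bar{N} \in \mathcal{E}(\mathcal{K})$ with $N \subseteq \bar{N}$. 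Then $M, \bar{N} \models T^{ec}$ with $M \subseteq \bar{N}$, so model completeness yields $M \preceq \bar{N}$. Any existential sentence with parameters in $M$ satisfied in $N$ is also satisfied in $\bar{N}$ (existentials pass upward from substructures to superstructures) and hence in $M$ by elementarity, so $M$ is existentially closed in $N$, as required.

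The only step doing genuine work is the construction of $\bar{N}$; the hypothesis ``closed under unions of chains'' is used precisely there, through a standard transfinite chain argument alternating between supplying existential witnesses and taking unions. Without such an $\bar{N}$ one could not inject an arbitrary $N \in \mathcal{K}$ into the realm where model completeness of $T^{ec}$ has any force, and the direction ``model complete implies elementary'' would collapse. Everything else is bookkeeping around Robinson's Test.
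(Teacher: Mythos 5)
The paper does not actually prove this proposition --- it is quoted verbatim from Chang--Keisler \cite[Proposition 3.5.15]{changkeisler} as background, so there is no in-paper argument to compare against. Your proof is correct and is the standard one: the forward direction is an immediate application of Robinson's Test once you observe that members of $\mathcal{E}(\mathcal{K})$ are existentially closed in one another, and the converse correctly isolates the one nontrivial step, namely extending an arbitrary $N \in \mathcal{K}$ to some $\bar{N} \in \mathcal{E}(\mathcal{K})$ via the chain lemma so that model completeness gives $M \preceq \bar{N}$ and existential sentences can be pulled back down to $M$.
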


We now get back to our setting of graphs. Cherlin, Shelah, and Shi proved the following which in view of the proposition above shows that $T_\mathcal{F}^{ec}$ is model complete when the family $\mathcal{F}$ is finite.

\begin{theorem}{\rm \cite[Theorem 1]{cherlinshelahshi}} Let $\mathcal{F}$ be a finite family of finite graphs. Then a countable graph $G \in \mathcal{E}_\mathcal{F}$ if and only if $G \models T_\mathcal{F}^{ec}$. Moreover, if every $F\in \mathcal{F}$ is connected, then $T_\mathcal{F}^{ec}$ is a complete theory.
\end{theorem}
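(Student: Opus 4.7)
The forward direction $G\in\mathcal{E}_\mathcal{F}\Rightarrow G\models T_\mathcal{F}^{ec}$ is immediate from the definition $T_\mathcal{F}^{ec}=\Th(\mathcal{E}_\mathcal{F})$. For the reverse, my plan is to exhibit an explicit $\forall\exists$ axiomatisation of $\mathcal{E}_\mathcal{F}$. For each pair of finite $\mathcal{F}$-free graphs $A\subseteq B$ (up to isomorphism) I would introduce the extension axiom
\[ \forall \bar{a}\,\Big(\chi_A(\bar{a})\wedge \Psi_{A,B}(\bar{a})\to \exists \bar{b}\,\chi_B(\bar{a},\bar{b})\Big), \]
where $\chi_A,\chi_B$ are quantifier-free formulas pinning down the induced graph structure on $\bar a$ and on $\bar a\bar b$, and $\Psi_{A,B}$ is the universal formula expressing that the free amalgam $G\cup_A B$ is $\mathcal{F}$-free. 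Concretely, any forbidden $F\in\mathcal{F}$ in $G\cup_A B$ must decompose as $F=F_G\cup_{A_F}F_B$ with $F_G\subseteq G$, $F_B\subseteq B$ and shared part $A_F\subseteq A$; $\Psi_{A,B}$ will conjoin, over the finitely many choices of data $(F,F_G,F_B,A_F)$, the negation of ``$F_G$ embeds into $G$ extending the corresponding vertices of $\bar a$''. Finiteness of $\mathcal{F}$ is what will make $\Psi_{A,B}$ a single first-order formula.

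Next I would verify that these axioms characterise $\mathcal{E}_\mathcal{F}$. If $G\in\mathcal{E}_\mathcal{F}$ and $\chi_A(\bar a)\wedge\Psi_{A,B}(\bar a)$ holds, then $G\cup_A B$ lies in $\mathcal{C}_\mathcal{F}$ by design, and existential closedness of $G$ inside this extension yields the required $\bar b\in G$. Conversely, if $G$ satisfies all the axioms and $G\subseteq H\in\mathcal{C}_\mathcal{F}$, then for any existential $\exists\bar y\,\phi(\bar a,\bar y)$ witnessed in $H$ by some $\bar b$, I would let $A,B$ be the induced subgraphs on $\bar a$ and on $\bar a\bar b$. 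The condition $\Psi_{A,B}(\bar a)$ must hold in $G$: any $G$-witness to its failure would combine with the matching $F_B\subseteq B\subseteq H$ to embed a forbidden $F$ into $H$, contradicting $H\in\mathcal{C}_\mathcal{F}$. The axiom would then supply a witness inside $G$, establishing existential closedness of $G$ in $H$.

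For the completeness claim, assume every $F\in\mathcal{F}$ is connected. The first part together with the proposition cited earlier will give that $T_\mathcal{F}^{ec}$ is model complete. I would then observe that connectedness ensures joint embedding for $\mathcal{C}_\mathcal{F}$ via disjoint union: $G_1\sqcup G_2$ is $\mathcal{F}$-free because a connected $F$ cannot straddle two disjoint components. Given $G_1,G_2\models T_\mathcal{F}^{ec}$, I would embed both into $G_1\sqcup G_2\in\mathcal{C}_\mathcal{F}$, extend to an existentially closed $\bar H\in\mathcal{E}_\mathcal{F}$ using inductiveness of $\mathcal{C}_\mathcal{F}$, and invoke model completeness to conclude $G_1\preceq \bar H\succeq G_2$; hence $G_1\equiv G_2$, so $T_\mathcal{F}^{ec}$ is complete.

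The main obstacle will be the combinatorial bookkeeping behind $\Psi_{A,B}$: one must systematically enumerate all possible decompositions of each $F\in\mathcal{F}$ along the amalgam and verify that forbidding the corresponding $G$-sides is exactly what guarantees $G\cup_A B$ is $\mathcal{F}$-free. Finiteness of $\mathcal{F}$ will be essential both for $\Psi_{A,B}$ to be a single first-order formula and for the whole axiomatisation to remain first-order, consistent with the theorem breaking down for infinite $\mathcal{F}$.
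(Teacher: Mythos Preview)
The paper does not prove this theorem; it is quoted as \cite[Theorem 1]{cherlinshelahshi} and used as a black box, so there is no ``paper's own proof'' to compare against. That said, your proposed argument is essentially the one Cherlin--Shelah--Shi give in the cited reference: an explicit $\forall\exists$ axiomatisation via extension axioms whose hypothesis $\Psi_{A,B}(\bar a)$ encodes that the free amalgam $G\cup_A B$ is $\mathcal{F}$-free, followed by the standard ``model complete $+$ joint embedding $\Rightarrow$ complete'' argument, with joint embedding coming from disjoint unions when each $F\in\mathcal{F}$ is connected.

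Your outline is correct in substance. Two small points worth tightening when you write it up. First, in the ``axioms $\Rightarrow$ existentially closed'' direction you should reduce to the case $\bar b\cap G=\emptyset$ (absorbing any $b_i\in G$ into $\bar a$), so that the free amalgam $G\cup_A B$ is genuinely a weak subgraph of $H$ on the same vertex set; this is what makes the failure of $\Psi_{A,B}$ produce an actual copy of some $F$ inside $H$. Second, in specifying $\Psi_{A,B}$ you should be explicit that the enumeration is over triples $(F,\text{partition }V(F)=V_G\sqcup V_A\sqcup V_B,\ \phi)$ where $F$ has no edges between $V_G$ and $V_B$ and $\phi\colon F[V_A\cup V_B]\to B$ is an injective homomorphism sending $V_A$ into $A$; the conjunct then forbids an injective homomorphism $F[V_G\cup V_A]\to G$ agreeing with $\phi$ on $V_A$. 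Finiteness of $\mathcal{F}$ and of $B$ makes this a finite conjunction, hence a single universal formula, exactly as you say.
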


\begin{example}\rm \cite[Example 4]{cherlinshelahshi} Let $\mathcal{F}=\{S_3\}$ where $S_3$ is a star of degree $3$, that is, a graph of $4$ vertices where one vertex is adjacent to the other three, and there are no more edges. Then $T_{\mathcal{F}}$ is the theory of graphs in which every vertex has degree at most $2$. And $T_\mathcal{F}^{ec}$ is the theory of graphs in which every vertex has degree $2$, and which contain infinitely many cycles $C_n$ for each $n\geq 3$. Let $\mathbb{Z}$ be the $2$-way infinite path, that is, vertices are the integers, and every $n$ is adjacent to $n+1$. Then a countable model of $T_\mathcal{F}^{ec}$ is characterised up to isomorphism by the number of its connected components isomorphic to $\mathbb{Z}$. Let $G_k\models T_\mathcal{F}^{ec}$ be the countable model with $k$-many components isomorphic to $\mathbb{Z}$. Then $\mathcal{E}_\mathcal{F}=\{G_k : k\in \omega+1\}$. Moreover $G_\omega \in \mathcal{C}_\mathcal{F}$ is a universal $S_3$-free graph. Remember that the members of $\mathcal{C}_\mathcal{F}$ and $\mathcal{E}_\mathcal{F}$ are countable.       
\end{example}

\begin{definition}\rm 
Suppose that $M$ is an $\mathcal{L}$-structure, and let $A\subseteq M$. The \textit{algebraic closure} $\acl_M(A)$ of $A$ in $M$ is the union of all finite $A$-definable subsets of $M$.   
\end{definition}

\begin{theorem}{\rm \cite[Theorem 3]{cherlinshelahshi}} Let $\mathcal{F}$ be a finite family of connected finite graphs. Then the following are equivalent.
\begin{enumerate}[label=(\roman*), itemsep=-1pt, topsep=-6pt]
\item The theory $T_\mathcal{F}^{ec}$ is $\omega$-categorical.

\item For any $M\models T_\mathcal{F}^{ec}$ and finite $A\subseteq M$, we have that $\acl_M(A)$ is finite.
\end{enumerate}
\end{theorem}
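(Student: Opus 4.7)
The plan is to handle the two implications separately.

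For (i) $\Rightarrow$ (ii), I would invoke a standard consequence of the Ryll-Nardzewski theorem. Since $\omega$-categoricity is preserved under adjoining finitely many constants, the theory of the expansion $(M, A)$, where each element of $A$ is named by a constant, is $\omega$-categorical, so by Ryll-Nardzewski there are only finitely many complete $1$-types over $A$ realised in $M$. Each element of $\acl_M(A)$ realises an algebraic $1$-type, hence one with finitely many realisations, so $\acl_M(A)$ is a finite union of finite sets and is therefore finite.

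For (ii) $\Rightarrow$ (i), I would again aim to apply Ryll-Nardzewski, this time to conclude that for each $n$ there are only finitely many complete $n$-types over $\emptyset$ in $T_\mathcal{F}^{ec}$. The central claim is a \emph{bridge}: the complete type of a tuple $\bar{a}$ in $M \models T_\mathcal{F}^{ec}$ is determined by the ordered isomorphism type of $\acl_M(\bar{a})$ (as an enumerated graph). Given such a bridge, the hypothesis that $\acl_M(\bar{a})$ is finite, together with the fact that only finitely many $\mathcal{F}$-free graphs exist on a fixed finite vertex set up to isomorphism, immediately bounds the number of complete $n$-types by a finite quantity.

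To establish the bridge I would run a back-and-forth argument, either inside $M$ or between two countable models of $T_\mathcal{F}^{ec}$. Starting from an ordered isomorphism $\phi : \acl_M(\bar{a}) \to \acl_M(\bar{a}')$ and a new element $b$, one forms the finite $\mathcal{F}$-free graph $G := \acl_M(\bar{a}b)$, extending $\acl_M(\bar{a})$, and must realise an isomorphic copy of $G$ inside $M$ over $\phi(\acl_M(\bar{a}))$, with $b$ sent to some $b' \in M$. Model completeness of $T_\mathcal{F}^{ec}$ (from the preceding theorem) allows this condition to be captured by an existential sentence over the base, and existential closedness of $M$ then reduces the task to producing \emph{any} $\mathcal{F}$-free graph extending $M$ into which $G$ embeds over the shared base.

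The main obstacle is precisely this amalgamation step. The naive free amalgam of $M$ and $G$ over $\phi(\acl_M(\bar{a}))$ may fail to be $\mathcal{F}$-free, since a connected forbidden graph $F \in \mathcal{F}$ could straddle the base with vertices on both sides. The combinatorial leverage comes from the algebraic closedness of the base: the connectedness of each $F$ restricts the way such a copy can embed across the amalgam, and the assumption that $\phi(\acl_M(\bar{a}))$ is algebraically closed is designed precisely to obstruct such straddling copies --- informally, a forbidden configuration with only some vertices inside $\acl_M(\bar{a}')$ would, by algebraicity, force an additional element of $\acl_M(\bar{a}')$ to be present and would contradict its closedness. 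A secondary subtlety is choosing $b' \in M$ so that $\acl_M(\bar{a}' b')$ is exactly the embedded image of $G$, rather than something larger; this is arranged by taking the amalgam as generic as possible over $M$ and then appealing once more to existential closedness to pull a witness back into $M$.
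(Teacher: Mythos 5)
First, a point of comparison: the paper does not prove this statement at all --- it is quoted from Cherlin--Shelah--Shi with a citation and used as a black box --- so there is no in-paper proof to measure yours against. On its own merits, your direction (i) $\Rightarrow$ (ii) is correct and standard: completeness of $T_\mathcal{F}^{ec}$ (supplied by the connectedness hypothesis via the preceding theorem) lets you apply Ryll-Nardzewski after naming the elements of $A$, and algebraic types have finitely many realisations.

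The gap is in (ii) $\Rightarrow$ (i), at exactly the step you flag as the main obstacle, and the heuristic you offer for it is false: it is \emph{not} true that the free amalgam of two $\mathcal{F}$-free graphs over a common algebraically closed subset is $\mathcal{F}$-free, even for $\mathcal{F}=\{\bowtie\}$. Concretely, let $M$ contain a chimney with base edge $cy$ and a tip $x$, a second chimney with base edge $vz$, and the extra edge $cv$ (which lies in no triangle of $M$). By $(\dagger)$ the set $A=\{c,y,v,z\}$ is algebraically closed in $M$, since the special edges of all triangles meeting $A$ are $cy$ and $vz$, which already lie in $A$. Now let $G\supseteq A$ be a bowtie-free graph in which $cv$ is instead the base edge of a chimney with a new tip $u$; one can arrange that $A$ is algebraically closed there too. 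The free amalgam of $M$ and $G$ over $A$ contains the triangles $cyx$ and $cvu$, which meet only in $c$: a bowtie, with three of its five vertices inside the algebraically closed base. So algebraic closedness of the base does not by itself obstruct straddling copies of a connected forbidden graph. What saves the day in this instance is that the two sides disagree about which edge at $c$ is special, and (using disintegratedness and the trichotomy $K_4$ / tip / base) such a disagreement is already detectable from the enumerated isomorphism type of the algebraic closure --- but establishing that the isomorphism type of $\acl$ carries exactly this information, and formulating the correct restricted amalgamation (the paper's Section 3 restricts to \emph{special} bowtie-free graphs for precisely this reason), is the entire content of the theorem, not a formality to be waved through. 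Separately, if you insist on counting $n$-types via Ryll-Nardzewski rather than running a direct back-and-forth between two countable models, you also need to upgrade ``each $\acl_M(\bar{a})$ is finite'' to a uniform bound $|\acl_M(\bar{a})|\leq f(n)$; without that, ``finitely many $\mathcal{F}$-free graphs on a fixed finite vertex set'' does not bound the number of types.
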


\begin{proposition}{\rm \cite[Proposition 1]{cherlinshelahshi}}\label{acl4} Let $G\in \mathcal{E}_{\bowtie}$ be a countable existentially closed bowtie-free graph, and let $A\subseteq G$ be finite. Then $|\acl_G(A)|\leq 4|A|$. 
\end{proposition}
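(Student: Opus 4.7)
My plan is to first bound $|\acl_G(\{a\})|$ by a local analysis of the triangle structure at a single vertex $a$, and then to upgrade to finite sets by establishing a locality property of algebraic closure. For each $a \in G$ let $L(a)$ denote the subgraph induced on the open neighbourhood $N(a)$. Each edge of $L(a)$ completes a triangle through $a$, and bowtie-freeness forces any two such triangles to share a vertex other than $a$, so the edge set of $L(a)$ is an intersecting family. An intersecting family of edges in a graph is either contained in a single star $K_{1,n}$ or equals the triangle $K_3$. Existential closure rules out finite stars: for any edge $\{v,w\}$ in $L(a)$, adjoining to $G$ a new vertex $w'$ with $w' \sim a$, $w' \sim v$, and no other adjacencies produces a bowtie-free extension (the sole new triangle $\{a,v,w'\}$ shares $v$ with every other triangle through $a$ and shares $a$ with every other triangle through $v$), so by existential closure such $w'$ is already present in $G$. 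Iterating, the edges of $L(a)$ take exactly one of the shapes: (i) empty; (ii) an infinite star $K_{1,\infty}$ with a unique apex $v_a$; (iii) a copy of $K_3$ on a triple $\{b,c,d\}$, completing a $K_4$ together with $a$.

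Let $X_a := \{x \in N(a) : x \text{ lies in at least two triangles through } a\}$. This set is $a$-definable by a simple existential formula, and by the trichotomy it equals $\emptyset$, $\{v_a\}$, or $\{b,c,d\}$ in cases (i)--(iii), so $|X_a| \leq 3$. Using $\omega$-categoricity together with the realisation of types coming from existential closure, one checks that every vertex outside $\{a\} \cup X_a$ has infinite $\Aut(G/a)$-orbit: the leaves of the star in case (ii), the remaining non-triangle neighbours of $a$, and the various classes of non-neighbours of $a$ all come in infinite orbits, since additional vertices of the same local type can always be produced by EC. Hence $\acl_G(\{a\}) = \{a\} \cup X_a$ and $|\acl_G(\{a\})| \leq 4$.

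To finish, it suffices to establish the locality inclusion
\[
    \acl_G(A) \;\subseteq\; \bigcup_{a \in A} \acl_G(\{a\}),
\]
which would immediately give $|\acl_G(A)| \leq \sum_{a \in A} |\acl_G(\{a\})| \leq 4|A|$. Given $b \in \acl_G(A) \setminus A$ with $b \notin \acl_G(\{a\})$ for every $a \in A$, my plan is to amalgamate a fresh copy of $b$'s local configuration into $G$ over $A \cup \acl_G(A)$ and appeal to existential closure to realise it inside $G$, producing a distinct $b' \in G$ with the same type over $A$ as $b$; iterating then exhibits infinitely many $\Aut(G/A)$-translates of $b$, contradicting $b \in \acl_G(A)$.

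The main obstacle is this locality step, since algebraic closure does not in general distribute over unions of singletons. The amalgamation has to be set up so that no new bowtie is created at any vertex of $\acl_G(A)$, most delicately at the apices $v_a$ arising in case (ii), which lie in $\acl_G(A)$ but not in $A$. The trichotomy of the first step is exactly what makes this tractable: every potential bowtie-obstruction localises at a single vertex of $A$ together with its at most three triangle-partners, so bowtie-freeness of the amalgam can be certified by a finite case analysis once the type of $b'$ over $\acl_G(A)$ is prescribed to agree with that of $b$.
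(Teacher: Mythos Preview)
The paper does not give its own proof of this proposition; it is quoted from \cite{cherlinshelahshi}. What the paper does record (in the discussion following Theorem~\ref{universalbowtiefreegraph}) is the output of that proof: the explicit formula~\eqref{acl} for $\acl$, the trichotomy tip\,/\,base\,/\,$K_4$-vertex, and the disintegration $\acl(A)=\bigcup_{a\in A}\acl(\{a\})$. Your outline aims at the same structural picture, but it contains a genuine error.

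Your claim that existential closure rules out finite stars in $L(a)$ is false. Let $a$ be a tip vertex of a chimney with base $\{b,c\}$ and some other tip $d$. Then $L(a)$ consists of the single edge $\{b,c\}$. Your proposed extension adjoins $w'$ with $w'\sim a$ and $w'\sim v$ where $v=b$, $w=c$; but then the new triangle $\{a,b,w'\}$ and the old triangle $\{b,c,d\}$ meet only in $b$, so the extension contains a bowtie. The mistake in your parenthetical justification is the assertion that $\{a,v,w'\}$ ``shares $a$ with every other triangle through $v$'': bowtie-freeness of $G$ only forces every triangle through $v=b$ to meet $\{a,w\}=\{a,c\}$, not to contain $a$ specifically. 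Hence your trichotomy is missing a fourth case, namely $L(a)$ a single edge, and this case genuinely occurs (exactly when $a$ is a tip vertex).

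This in turn breaks your formula $\acl(\{a\})=\{a\}\cup X_a$. For a tip vertex $a$ there is only one triangle through $a$, so no neighbour lies in two such triangles and $X_a=\emptyset$; yet $\acl(\{a\})$ is the whole triangle $\{a,b,c\}$, since $\{b,c\}$ is the (finite) $a$-definable set of triangle-neighbours of $a$. The bound $|\acl(\{a\})|\le 4$ still holds in this case, but your argument does not deliver it. Once you add the tip case by hand, the singleton bound is straightforward; the disintegration step, which you rightly flag as the crux and leave as a sketch, is exactly what \cite{cherlinshelahshi} establishes via the explicit description~\eqref{acl}.
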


As every graph $G\in \mathcal{C}_\mathcal{F}$ embeds in some graph $\bar{G}\in \mathcal{E}_\mathcal{F}$, we have that $\mathcal{C}_\mathcal{F}$ contains a universal element if and only if $\mathcal{E}_\mathcal{F}$ contains a universal element. Therefore, by the last two theorems and proposition above we have that $\mathcal{E}_{\bowtie}=\{G \text{ graph}: G\models T_{\bowtie}^{ec} \text{ and } |G|=\aleph_0\}$ contains exactly one element; an $\omega$-categorical existentially closed universal bowtie-free graph.

%*******************************************************************************************************************
%*******************************************************************************************************************
%*******************************************************************************************************************

\section{Bowtie-free Graphs}\label{bowtieFAPsection}

Let $\mathcal{L}=\{E\}$ be the language of graphs. Recall that for graphs $G, H$, by $G\subseteq H$ we mean that $G$ is an induced subgraph of $H$.  A bowtie $(\bowtie)$ is the graph formed by freely amalgamating two triangles over one common vertex. And a graph is called bowtie-free if it has no subgraph isomorphic to the bowtie (not necessarily induced subgraph). We denote by $\mathcal{C}_{\bowtie}$ the class of all countable bowtie-free graphs, and denote by $\mathcal{C}^0_{\bowtie}$ the class of all finite bowtie-free graphs. Notice that a graph is bowtie-free if and only if it has no induced subgraph isomorphic to a graph $B$ where $\bowtie\ \subseteq B \subseteq K_5$, where $K_n$ denotes the complete graph on $n$ vertices.

Following Hubi\v{c}ka and Ne\v{s}et\v{r}il \cite{hubickanesetril}, a \textit{chimney} is the graph formed by taking the free amalgam of \textit{two} or more triangles over one common edge. Moreover, we expand the terminology as follows. We call the vertices of the common edge \textit{base vertices}, and the rest we call them \textit{tip vertices}. We also call the number of tip vertices the \textit{height} of the chimney. Any chimney contains exactly two base vertices, and at least two tip vertices.

In \cite{cherlinshelahshi}, an edge in a bowtie-free graph is called a \textit{special edge} if it lies in at least two triangles. For example, the base edge of a chimney is special, and every edge in $K_4$ is special. Motivated by special edges we present the following definition. 

\begin{definition}\rm 
A bowtie-free graph is called \textit{special} if every vertex is contained in a triangle that contains a special edge. Let $\mathcal{C}_{\bowtie}^{\ast}$ denote the class of all finite special bowtie-free graphs.  
\end{definition}

Observe that chimneys and $K_4$ are special bowtie-free graphs with the additional property that every edge is contained in some triangle.   

\begin{fact}[\cite{cherlinshelahshi}, \cite{hubickanesetril}]\rm Suppose that $G$ is a finite connected bowtie-free graph such that every edge is contained in some triangle. If $K_4\subseteq G$, then $G\cong K_4$. Otherwise, $G$ is a chimney or a triangle.
\end{fact}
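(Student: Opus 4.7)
The plan is to handle the two cases of the Fact by a direct bowtie-extraction argument, exploiting the hypothesis that every edge lies in a triangle to produce, from any ``unexpected'' vertex, a second triangle sharing exactly one vertex with a fixed triangle.

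First I would dispatch the $K_4$ case. Suppose $K_4 \subseteq G$ on vertices $\{a,b,c,d\}$ and, for contradiction, that $G$ has a further vertex $u$; by connectedness we may take $u$ adjacent to some vertex of $K_4$, say $u \sim a$. Since $ua$ lies in a triangle, pick $w$ with $w \sim u$ and $w \sim a$. If $w \notin \{a,b,c,d\}$ then the triangle $\{u,a,w\}$ and the triangle $\{a,b,c\}$ meet only at $a$, giving a bowtie; if $w \in \{b,c,d\}$, say $w=b$, then $\{u,a,b\}$ and $\{a,c,d\}$ meet only at $a$, again a bowtie. Both contradict bowtie-freeness, so $V(G) = \{a,b,c,d\}$ and hence $G = K_4$.

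Now assume $K_4 \not\subseteq G$, and fix a triangle $T_0 = \{a,b,c\}$ (one exists because $G$ has an edge, and every edge is in a triangle). If $V(G) = T_0$ then $G$ is a triangle. Otherwise, connectedness gives a vertex $v \notin T_0$ adjacent to some vertex of $T_0$, say $a$, and a triangle $\{v,a,w\}$; the same bowtie trick as above forces $w \in \{b,c\}$, say $w=b$, so $v$ is adjacent to both $a$ and $b$. Since $K_4 \not\subseteq G$, $v$ is not adjacent to $c$. Set
\[
S := \{x \in V(G) \setminus \{a,b\} : x \sim a \text{ and } x \sim b\},
\]
so that $c, v \in S$ and $|S| \geq 2$.

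The key step is to show $V(G) = \{a,b\} \cup S$ and that $S$ is an independent set; the whole argument hinges on repeatedly finding bowties formed by triangles through $a$ or $b$. Concretely, I would prove three small closure claims, each by exhibiting a bowtie if it fails: (i) any neighbour of $a$ other than $b$ lies in $S$ (otherwise, a triangle through that neighbour and $a$ combines with either $\{a,b,c\}$ or $\{a,b,v\}$ to yield a bowtie, using that $c,v \in S$ are distinct witnesses); (ii) symmetrically, any neighbour of $b$ other than $a$ lies in $S$; (iii) any neighbour of a vertex $p \in S$ lies in $\{a,b\} \cup S$, because a triangle $\{u,p,w\}$ through $up$ with $w \notin \{a,b\}$ combines with $\{a,b,p\}$ to give a bowtie, while $w \in \{a,b\}$ reduces to (i) or (ii). Together these show $\{a,b\} \cup S$ is closed under adjacency, so connectedness forces $V(G) = \{a,b\} \cup S$. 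Finally, two distinct elements $s_1, s_2 \in S$ cannot be adjacent, else $\{a,b,s_1,s_2\}$ would be a $K_4$. Hence $G$ is the free amalgam of the triangles $\{a,b,s\}$ over the edge $ab$ as $s$ ranges over $S$, i.e., a chimney of height $|S| \geq 2$.

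The only real obstacle is bookkeeping in step (iii): I must verify that the five vertices appearing in each alleged bowtie are genuinely distinct, using that $v \neq c$, that $v \not\sim c$, and that $p \notin \{a,b\}$. Once those distinctness checks are made carefully, the rest is a straightforward assembly of the cases above.
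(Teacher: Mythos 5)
Your proof is correct. Note that the paper does not prove this statement at all --- it is recorded as a Fact cited from Cherlin--Shelah--Shi and Hubi\v{c}ka--Ne\v{s}et\v{r}il --- so there is no in-paper argument to compare against; what you have written is a valid, self-contained elementary proof. The two pillars of your argument both check out: (a) bowtie-freeness is exactly the statement that no two triangles meet in a single vertex, which is what every one of your contradictions produces (the extra edges present in, say, $\{u,a,b\}\cup\{a,c,d\}$ inside the $K_4$ case are harmless because the forbidden bowtie is a not-necessarily-induced subgraph); and (b) the reason claim (i) works is precisely that you have \emph{two} triangles $abc$ and $abv$ on the edge $ab$, so a triangle $\{u,a,w\}$ avoiding $b$ can share its third vertex with at most one of them, and the other yields the bowtie --- your distinctness checks ($v\neq c$, $v\notin T_0$, $p\notin\{a,b\}$) are exactly the ones needed and they all go through. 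The only caveat worth recording is the degenerate case: a one-vertex graph satisfies all the hypotheses vacuously but is none of $K_4$, a triangle, or a chimney, so the Fact (and your proof, which assumes $G$ has an edge) implicitly requires $G$ to contain at least one edge; this is a defect of the statement as quoted rather than of your argument.
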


In \cite[Definition 2.2]{hubickanesetril}, a bowtie-free graph is called \textit{good} if every vertex is contained either in a $K_4$ or in a chimney. One can check that the two notions, special and good, are equivalent.  

\begin{figure}[ht]
\centering
\begin{tikzpicture}
\tikzset{Bullet/.style={circle,draw,fill=black,scale=0.4}}
\node[Bullet](a) at(3,0){};
\node[Bullet] (d)at(0,0){};
\node[Bullet] (u)at(1.5,1){};
\node[Bullet] (s)at(1.5,2){};
\node[Bullet] (t)at(1.5,3){};
\node[Bullet] (r)at(1.5,4){};

\node[Bullet] (e)at(11,1){};
\node[Bullet] (f)at(13,1){};
\node[Bullet] (g)at(12,1.5){};
\node[Bullet] (h)at(12,2.3){};

\node[Bullet] (e1)at(4,2.5){};
\node[Bullet] (f1)at(5,4){};
\node[Bullet] (g1)at(4,3.5){};
\node[Bullet] (h1)at(3,4){};

\node[Bullet](a1) at(9,0.5){};
\node[Bullet] (d1)at(7,0.5){};
\node[Bullet] (u1)at(8,1.5){};
\node[Bullet] (s1)at(8,2.5){};
\node[Bullet] (t1)at(8,3.5){};

\draw[line width=0.4mm](a)--(d)--(u)--(a)--(s)--(d)--(t)--(a)--(r)--(d) {};
\draw[line width=0.4mm](e)--(f)--(g)--(h)--(e)--(g)--(f)--(h) {};
\draw[line width=0.4mm](e1)--(f1)--(g1)--(h1)--(e1)--(g1)--(f1)--(h1) {};
\draw[line width=0.4mm](a1)--(d1)--(u1)--(a1)--(s1)--(d1)--(t1)--(a1) {};
\draw(a1)--(e){};
\draw(u)edge[bend right=10](u1){};
\draw(u)edge[bend right=10](s1){};
\draw(u1)edge[bend right=10](h){};
\draw(e1)--(r)--(e1)--(t)--(e1)--(s)--(e1)--(s1)--(e1)--(t1){};
\draw(t1)--(h)--(s1){};
\end{tikzpicture}

\caption{A special bowtie-free graph. Thick edges lie in triangles, thin edges don't.} 
\end{figure}

\begin{fact}[\cite{hubickanesetril}] \rm 
Let $G$ be a special bowtie-free graph. By deleting all the edges of $G$ which do not lie in any triangle, we obtain a disjoint union of copies of $K_4$ and chimneys.
\end{fact}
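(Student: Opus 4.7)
The plan is to let $G'$ denote the spanning subgraph of $G$ obtained by deleting every edge that lies in no triangle of $G$, and then to analyse the connected components of $G'$ by appealing to the previous Fact.

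First I would verify that every edge of $G'$ is still contained in a triangle of $G'$ itself. Indeed, if $e$ is an edge of $G'$, then by definition $e$ lies in some triangle $T$ of $G$; the other two edges of $T$ also lie in $T$, so all three edges survive the deletion and $T$ is a triangle of $G'$. Since $G'$ is a subgraph of the bowtie-free graph $G$, it is itself bowtie-free. Consequently each connected component $C$ of $G'$ is a finite, connected, bowtie-free graph in which every edge lies in a triangle, so by the preceding Fact $C$ must be either $K_4$, a chimney, or a single triangle.

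The only delicate point is ruling out the last possibility: that some component $C$ is a single triangle on vertices $\{u,v,w\}$. Since $G$ is special, $u$ must lie in a $K_4$ or a chimney inside $G$. If $u$ belongs to some $K_4 \subseteq G$, then all six edges of this $K_4$ lie in triangles (within the $K_4$) and so survive in $G'$, placing the whole $K_4$ inside the component of $u$ and forcing $|C|\ge 4$, contradiction. If instead $u$ lies in a chimney $H$ of $G$, then every edge of $H$ belongs to one of the triangles of $H$ and hence survives in $G'$; since $H$ has at least four vertices, the component of $u$ in $G'$ contains $H$, again contradicting $|C|=3$. This eliminates the triangle case and yields the desired disjoint union of copies of $K_4$ and chimneys.

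The main obstacle here is really the last step, which uses the specialness hypothesis in an essential way; without it the conclusion would fail, since an isolated triangle in $G$ would survive as a triangle-component of $G'$.
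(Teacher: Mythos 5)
Your argument is correct. Note that the paper does not actually supply a proof of this Fact --- it is imported from Hubi\v{c}ka--Ne\v{s}et\v{r}il with only a citation --- so there is no in-paper proof to compare against; your derivation is the natural one and it works. The three steps are all sound: every edge of $G'$ that survives deletion lies in a triangle all of whose edges also survive, so each connected component of $G'$ satisfies the hypotheses of the preceding classification Fact and must be a $K_4$, a chimney, or a triangle; and specialness kills the triangle case because the $K_4$ or chimney of $G$ witnessing specialness for any vertex has all of its edges in triangles, hence survives intact into $G'$ and forces the component to have at least four vertices. Two small points you could tighten. First, the same specialness argument is needed to rule out components consisting of a single vertex (the preceding Fact is vacuous or inapplicable there), and it handles them identically, so it is worth one sentence. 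Second, the preceding Fact is stated for \emph{finite} connected graphs, so your proof as written covers the finite case, which is the one the paper actually uses (the class $\mathcal{C}_{\bowtie}^{sp}$ consists of finite graphs); if one wanted the statement for infinite special bowtie-free graphs one would need the classification extended to allow chimneys of infinite height.
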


Therefore, we get the following observation.

\textbf{Observation}. A finite graph $G$ is a special bowtie-free graph if and only if the vertex set of $G$ can be partitioned into disjoint induced subgraphs $H_1, H_2, \ldots, H_n$ where each $H_i$ is isomorphic to either a chimney or $K_4$, and moreover for every vertex $v\in H_i$ we have that the set of neighbours of $v$ not in $H_i$ forms an independent set (no two vertices are adjacent).

\begin{lemma}[\cite{hubickanesetril}]\label{specialcofinallemma}
The subclass $\mathcal{C}_{\bowtie}^{\ast}$ of special bowtie-free graphs is cofinal in the class $\mathcal{C}^0_{\bowtie}$ of finite bowtie-free graphs.   
\end{lemma}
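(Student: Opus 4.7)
The plan is to build a finite special bowtie-free graph $G^*$ containing $G$ as an induced subgraph by attaching pairwise disjoint private gadgets to the vertices and triangles of $G$ that obstruct specialness; the main obstacle is verifying that these attachments introduce no bowtie.

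First I would organise the triangle structure of $G$. Let $G'$ denote the spanning subgraph of $G$ whose edges are exactly the edges of $G$ that lie in some triangle of $G$. Every triangle of $G$ has all three edges in $G'$, so each connected component $C$ of $G'$ containing at least one edge is itself a connected bowtie-free graph in which every edge lies in a triangle, and therefore by the fact cited above $C$ is a copy of $K_4$, a chimney, or a single triangle. Because $G$ is bowtie-free, any two triangles through a common vertex share an edge, so each vertex of $G$ belongs to at most one such non-trivial component. Consequently the vertices of $G$ split disjointly into four types: (i) inside a $K_4$-component, (ii) inside a chimney-component, (iii) inside a ``lone'' triangle-component $\{a,b,c\}$ whose vertices are in no other triangle of $G$, and (iv) in no triangle at all. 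Types (i) and (ii) are already special.

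Form $G^*$ from $G$ by performing the following, using pairwise disjoint fresh vertices:
\begin{enumerate}[label=(\alph*), noitemsep,topsep=-3pt]
    \item for each triangle-component $\{a,b,c\}$ of type (iii), add a fresh vertex $d$ adjacent exactly to $a,b,c$, making $\{a,b,c,d\}$ a $K_4$;
    \item for each vertex $v$ of type (iv), add fresh vertices $w_v,t_v^1,t_v^2$ with the edges $vw_v, vt_v^1, vt_v^2, w_vt_v^1, w_vt_v^2$, making $\{v,w_v,t_v^1,t_v^2\}$ a chimney with base $vw_v$ and tips $t_v^1,t_v^2$.
\end{enumerate}
Every added edge is incident to a fresh vertex, so $G$ is an induced subgraph of $G^*$, and by construction every vertex of $G^*$ now lies in a $K_4$ or in a chimney, so $G^*$ is special.

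It remains to verify bowtie-freeness. Call $\{a,b,c,d\}$ (from (a)) or $\{v,w_v,t_v^1,t_v^2\}$ (from (b)) an \emph{attachment set} $A$. Every triangle of $G^*$ using a fresh vertex is contained in a single attachment set, since fresh vertices are adjacent in $G^*$ only to the vertices listed in their gadget. Different attachments use disjoint fresh vertices, and their old parts are disjoint as well, because non-trivial components of $G'$ are vertex-disjoint and type (iv) vertices lie in no such component. Suppose for contradiction that triangles $T_1, T_2$ in $G^*$ form a bowtie meeting at a single vertex $x$. Since $G$ is bowtie-free, at least one of them, say $T_1$, uses a fresh vertex and hence $T_1 \subseteq A$ for some attachment set $A$. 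If $T_2$ also uses a fresh vertex, disjointness forces $T_2 \subseteq A$ too; but any two triangles inside a $K_4$, and any two triangles inside our four-vertex chimney, share an edge, contradicting $|T_1 \cap T_2|=1$. Hence $T_2$ is a triangle of $G$, and the shared vertex $x \in T_1 \cap T_2$ is an old vertex of $A$. In case (b) the only old vertex of $A$ is the triangle-free vertex $v$, so $v \in T_2 \subseteq G$ is absurd. In case (a), $x \in \{a,b,c\}$, whose only triangle in $G$ is $\{a,b,c\}$ itself, so $T_2 = \{a,b,c\}$; but $T_1 \subseteq \{a,b,c,d\}$ must contain the fresh vertex $d$, so $T_1 \cap \{a,b,c\} = T_1 \setminus \{d\}$ has size $2$, again contradicting $|T_1 \cap T_2|=1$. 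Thus $G^*$ is bowtie-free, completing the proof.
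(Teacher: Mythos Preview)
Your proof is correct and follows essentially the same strategy as the paper's: fix each non-special vertex by attaching a small private gadget (you use a chimney for triangle-free vertices where the paper uses a $K_4$, and both complete a lone triangle to a $K_4$). Your version is in fact more thorough, since you organise the construction globally via the components of the triangle-edge subgraph and give a full case analysis for bowtie-freeness, whereas the paper proceeds vertex-by-vertex and simply asserts that no bowtie is introduced.
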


\begin{proof}
We show that any finite bowtie-free graph is an induced subgraph of a special bowtie-free graph. Let $G$ be a finite bowtie-free graph. Suppose $v\in G$ is a vertex that is not contained in a triangle with a special edge. If $v$ belongs to a triangle (whose none of its edges are special), say $vxz$, then add a new vertex $u$ together with edges $uv$, $ux$, and $uz$, making $vxzu$ isomorphic to a $K_4$. Otherwise, $v$ is not contained in a triangle. In this case, add a new copy of $K_4$ and identify $v$ with one of its vertices. One can show that neither of these two actions will introduce a bowtie. Repeat this process until every vertex is contained in a triangle with a special edge.  
\end{proof}

Let $\mathcal{L}$ be a first-order language. We say that a class $\mathcal{C}$ of finite $\mathcal{L}$-structures has the \textit{amalgamation property} if whenever $A,B,C$ are in $\mathcal{C}$ and $f_1:A\to B$, $f_2:A\to C$ are embeddings, there is $D$ in $\mathcal{C}$ and embeddings $g_1:B\to D$, $g_2:C\to D$ such that $g_1\circ f_1=g_2\circ f_2$. See \cite[Section 2.1]{macphersonsurvey} for more details. 

The class $\mathcal{C}^0_{\bowtie}$ of finite bowtie-free graphs has the joint embedding property, however, $\mathcal{C}^0_{\bowtie}$ does not have the amalgamation property. To see this, consider the three graphs below. 

\begin{figure}[ht]
\begin{tikzpicture}
\tikzset{Bullet/.style={circle,draw,fill=black,scale=0.5}}
\node[Bullet,label=below:{$b$}](a) at(0,0){};
\node[Bullet, label=below:{$c$}] (b)at(1,0){};
\node[Bullet, label=below:{$a$}] (d)at(-1,0){};
\node[Bullet, label=left:{$u$}] (u)at(-1,1){};
\draw[thick](a)--(b)--(a)--(d)--(u)--(a) {};
\end{tikzpicture}
\hspace{2cm}
\begin{tikzpicture}
\tikzset{Bullet/.style={circle,draw,fill=black,scale=0.5}}
\node[Bullet,label=below:{$b$}](b) at(0,0){};
\node[Bullet, label=below:{$c$}] (c)at(1,0){};
\node[Bullet, label=below:{$a$}] (a)at(-1,0){};
\draw[thick](a)--(b)--(c) {};
\end{tikzpicture}
\hspace{2cm}
\begin{tikzpicture}
\tikzset{Bullet/.style={circle,draw,fill=black,scale=0.5}}
\node[Bullet,label=below:{$b$}](a) at(0,0){};
\node[Bullet, label=below:{$c$}] (b)at(1,0){};
\node[Bullet, label=below:{$a$}] (d)at(-1,0){};
\node[Bullet, label=right:{$v$}] (v)at(1,1){};
\draw[thick](a)--(b)--(v)--(a)--(d) {};
\end{tikzpicture}\\
$B$ \hspace{4.2 cm} $A$ \hspace{4.2 cm} $C$
\caption{Any amalgamation of $B$ and $C$ over $A$ (with the identity embeddings) contains a bowtie as a subgraph.}
\end{figure}

\begin{definition}\rm
Let $\mathcal{L}$ be a relational language. 
\begin{itemize}
    \item We say that a class $\mathcal{C}$ of finite $\mathcal{L}$-structures has the \textit{free amalgamation property} if whenever $A,B,C$ are in $\mathcal{C}$ and $f_1:A\to B$, $f_2:A\to C$ are embeddings, there is $D$ in $\mathcal{C}$ and embeddings $g_1:B\to D$, $g_2:C\to D$ such that:
\begin{enumerate}
    \item $g_1\circ f_1=g_2\circ f_2$,
    \item $g_1(B)\cap g_2(C)=g_1\circ f_1(A)$,
    \item If $D\models R(\bar{d})$ for some tuple $\bar{d}\in D$ and relation symbol $R\in\mathcal{L}$, then $\bar{d}\in g_1(B)$ or $\bar{d}\in g_2(C)$.
\end{enumerate}

\item Let $A$, $B$, and $C$ be $\mathcal{L}$-structures such that $A$ is a common substructure of $B$ and $C$. The \textit{free amalgam} of $B$ and $C$ over $A$ is the $\mathcal{L}$-structure $D$ whose underlying set is the disjoint union of $B$ and $C$ over $A$, and for every relation symbol $R\in \mathcal{L}$ we define its interpretation in $D$ by setting $R^D=R^B\cup R^C$. 
\end{itemize}
\end{definition}

The following proposition is of a more general form than \cite[Lemma 3.1]{hubickanesetril} where special bowtie-free graphs are amalgamated over their induced subgraph on bases of chimneys and copies of $K_4$.

\begin{proposition}\label{FAP}
The class $\mathcal{C}_{\bowtie}^{\ast}$ of all finite special bowtie-free graphs has the free amalgamation property.
\end{proposition}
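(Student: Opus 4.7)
The plan is to show that for any induced embeddings $A \hookrightarrow B_1$ and $A \hookrightarrow B_2$ of special bowtie-free graphs, the free amalgam $C$ (identify the two copies of $A$, add no edges between $B_1 \setminus A$ and $B_2 \setminus A$) again lies in $\mathcal{C}_{\bowtie}^{sp}$. Specialness of $C$ is immediate: each vertex of $C$ lies in one of $A$, $B_1$, $B_2$, and the $K_4$- or chimney-subgraph witnessing specialness there is preserved inside $C$. The real content is showing that $C$ is bowtie-free.

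Suppose for contradiction that $T_1, T_2$ are triangles of $C$ with $T_1 \cap T_2 = \{v\}$, forming a bowtie. As there are no edges of $C$ between $B_1 \setminus A$ and $B_2 \setminus A$, each $T_i$ is contained in $B_1$ or in $B_2$. If both lie in the same factor, that factor already contains a bowtie, which is forbidden; so after relabelling $T_1 \subseteq B_1$, $T_2 \subseteq B_2$, and in particular $v \in B_1 \cap B_2 = A$. If some $T_i$ is entirely contained in $A$, then $T_i$ sits in both factors and both triangles end up inside a single factor; so $T_1, T_2 \not\subseteq A$.

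To derive a contradiction I will use the following standard local structure fact for bowtie-free graphs: any two triangles through a common vertex share an edge, and consequently the set of all triangles through $v$ either lies inside a single $K_4$ through $v$, or shares a common edge $\{v,u\}$ (forming a ``book'' with spine $\{v,u\}$). Since $A$ is special, $v$ is contained in a $K_4$-subgraph or in a chimney of $A$. In the $K_4$ case, say $v \in K \subseteq A$ with $K$ a $K_4$, the local structure fact applied inside the bowtie-free graph $B_1$ forces every triangle through $v$ in $B_1$ to lie in $K$, so $T_1 \subseteq K \subseteq A$, contradicting $T_1 \not\subseteq A$. Otherwise $v$ lies in a chimney $H \subseteq A$ with base vertices $b_1, b_2$. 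Using that the embeddings are \textit{induced} (so $B_i$ adds no edges among vertices of $A$), one checks that $v$ cannot lie in any $K_4$ of $B_i$ either: such a $K_4$ would have to contain the chimney triangles of $H$ through $v$ and would therefore require an extra edge between vertices of $H$ that is not present in $A$. Hence the book alternative applies in each $B_i$, and the chimney triangles through $v$ pin down the spine. If $v$ is a base vertex, say $v = b_1$, both $T_1$ and $T_2$ must contain the base edge $\{b_1, b_2\}$, so $T_1 \cap T_2 \supseteq \{v, b_2\}$, contradicting $T_1 \cap T_2 = \{v\}$. If instead $v$ is a tip, the only triangle through $v$ in $B_i$ is the single chimney triangle $\{v, b_1, b_2\}$, which lies in $A$, again contradicting $T_i \not\subseteq A$.

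The main obstacle is the chimney case, and in particular the need to exclude a new $K_4$ through $v$ arising in $B_i$ without already being present in $A$. This is precisely where the inducedness of the embeddings is essential: without it, a height-two chimney in $A$ could be closed up into a $K_4$ in $B_i$ by adding the missing edge between its two tips, and the resulting free amalgam would genuinely produce a bowtie, in keeping with the earlier figure showing that $\mathcal{C}_{\bowtie}^{0}$ as a whole fails amalgamation.
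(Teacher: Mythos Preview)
Your proof is correct and arrives at the same conclusion, but it is organised differently from the paper's argument. The paper fixes a hypothetical bowtie with centre $c\in A$, notes that $c$ lies in a $K_4$ or a chimney $M$ of $A$ by specialness of $A$, and then runs an explicit six-case analysis according to whether $c$ is a tip or base vertex of $M$ and where the neighbours $b,v$ sit relative to $M$, producing a bowtie inside $B_1$ or $B_2$ in each case. You instead invoke a single structural lemma for bowtie-free graphs (the triangles through a fixed vertex either all lie in one $K_4$ or all share a common ``spine'' edge), and use the chimney in $A$ to pin down the spine in each $B_i$; this collapses the case analysis to the dichotomy tip/base. Your route is cleaner conceptually and makes the role of inducedness more transparent; the paper's route is more elementary in that it never states or proves the book/$K_4$ dichotomy.

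One step of yours is a little compressed and worth spelling out. When $v$ is a \emph{tip} of the chimney $H$, there is only one chimney triangle of $H$ through $v$, so your sentence ``such a $K_4$ would have to contain the chimney triangles of $H$ through $v$ and would therefore require an extra edge between vertices of $H$'' does not by itself rule out a $K_4=\{v,b_1,b_2,w\}$ with $w\notin H$; and likewise the book alternative at $v$ alone does not force the book at $v$ to have a single page. The fix is to apply your dichotomy a second time at the base vertex $b_1$ (or $b_2$): since $b_1$ already lies in at least two chimney triangles of $H$, all sharing the edge $\{b_1,b_2\}$, the $K_4$ option at $b_1$ would force two tips of $H$ to be adjacent (contradicting that $A$ is induced in $B_i$), so every triangle through $b_1$ in $B_i$ contains $\{b_1,b_2\}$. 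This immediately gives both conclusions you need: no $K_4$ through $v$ in $B_i$, and the unique triangle through the tip $v$ in $B_i$ is $\{v,b_1,b_2\}\subseteq A$. With this clarification your argument goes through.
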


\begin{proof}
Suppose $A, B_1, B_2$ are finite special bowtie-free graphs such that $A\subseteq B_1$ and $A\subseteq B_2$. Let $C$ be the free amalgam of $B_1$ and $B_2$ over $A$. Thus, the vertex set of $C$ is the disjoint union of the vertices of $B_1$ and $B_2$ with the vertices of $A$ identified, and two vertices in $C$ are adjacent if and only if either they are both in $B_1$ and adjacent, or both are in $B_2$ and adjacent.  We will show that $C\in \mathcal{C}_{\bowtie}^{\ast}$. By free amalgamation, any triangle in $C$ either lives entirely in $B_1$ or entirely in $B_2$. For the sake of contradiction, suppose $C$ has a bowtie $W=\{a,b,c,u,v\}$ as a subgraph where $c$ is the vertex of degree four, and $abc$ and $cuv$ are the triangles. As $B_1$ and $B_2$ are bowtie-free, it must be that $W$ is neither contained entirely in $B_1$ nor in $B_2$. Moreover, the vertex $c$ must be in $A$. To see this suppose $c\notin A$, then say $c\in B_1\setminus A$, and as $W\not\subseteq B_1$, one vertex say $u\in B_2\setminus A$, but $c$ and $u$ are adjacent contradicting free amalgamation. Consequently, we may now assume without loss of generality that $a,b,c\in B_1$ with $a\in B_1\setminus A$, and $c,u,v\in B_2$ with $u\in B_2\setminus A$. 

By the hypothesis, $A$ is a special bowtie-free graph, so the vertex $c$ is either contained in a $K_4$ of $A$, or in a chimney of $A$. Supposing the former, then the triangle $abc$ together with any triangle in that $K_4 \subseteq A$ which contains $c$ but not $b$ will form a bowtie inside $B_1$, contradicting that $B_1$ is bowtie-free. So $c$ must be contained in a chimney $Y$ of $A$. There are five possibilities in this situation, based on whether $c$ is a tip or a base vertex of $Y$. All lead to a contradiction. 

Case 1: Suppose that $c$ is a tip vertex of $Y$, and $b\in Y$. Then $b$ must be a base vertex of $Y$ as it is adjacent to $c$, and so the triangle $abc$ with any triangle of $Y$ not containing $c$ will form a bowtie in $B_1$, a contradiction. 

Case 2: Suppose that $c$ is a tip vertex of $Y$, and $b\notin Y$. Then the triangle $abc$ with the triangle in $Y$ containing $c$ form a bowtie in $B_1$, a contradiction. 

Case 3: Suppose that $c$ is a base vertex of $Y$ and $b \notin Y$. Then the triangle $abc$ together with any triangle in $Y$ will form a bowtie in $B_1$, a contradiction.

Case 4: Suppose that $c$ is a base vertex of $Y$ and $b$ is a tip vertex of $Y$. Then the triangle $abc$ with another triangle of $Y$ not containing $b$ will form a bowtie in $B_1$, a contradiction.

Case 5:  Suppose that $b, c$ are the base vertices of $Y$. In this case, the triangle $cuv$ together with any triangle of $Y$ not containing the vertex $v$ will form a bowtie in $B_2$, a contradiction.

Hence, the free amalgam $C$ is bowtie-free. Now we show $C$ is special. Any vertex $x\in C$ is either in $B_1$ or $B_2$. Say $x\in B_1$. As $B_1$ is special, the vertex $x$ lies in a triangle $R$ which contains a special edge $e$. As $B_1$ is an induced subgraph of $C$, both $R$ and the other triangle sharing $e$ are also in $C$. Therefore, $C$ is a special bowtie-free graph.           
\end{proof}

We now know that the class $\mathcal{C}_{\bowtie}^{\ast}$ of all finite special bowtie-free graphs has the free amalgamation property. Moreover, it is closed under disjoint unions, and so it has the joint embedding property. However $\mathcal{C}_{\bowtie}^{\ast}$ is not closed under induced subgraphs, that is, it does not have the hereditary property. In this situation, we can apply a slight variation of Fra\"{i}ss\'{e}'s Theorem which does not require the class of finite structures in hand to have the hereditary property. More precisely, we apply Kueker-Laskowski \cite[Theorem 1.5]{kuekerlaskowski} to the class $\mathcal{C}_{\bowtie}^{\ast}$ and obtain the following.

\begin{theorem}\label{universalbowtiefreegraph}
There is a unique, up to isomorphism, graph $\mathcal{U}_{\bowtie}$ such that:
\begin{enumerate}[label=(\roman*), itemsep=-1pt, topsep=-6pt]
\item The graph $\mathcal{U}_{\bowtie}=\bigcup\limits_{i\in\omega}G_i$ where $G_i\in \mathcal{C}_{\bowtie}^{\ast}$ and $G_i\subseteq G_{i+1}$ for all $i\in\omega$.

\item Every $H\in \mathcal{C}_{\bowtie}^{\ast}$ embeds into $\mathcal{U}_{\bowtie}$.

\item Every finite isomorphism $f:G\to H$ where $G,H \in \mathcal{C}_{\bowtie}^{\ast}$ and $G,H \subseteq \mathcal{U}_{\bowtie}$ extends to an automorphism of $\ \mathcal{U}_{\bowtie}$.
\end{enumerate}
\end{theorem}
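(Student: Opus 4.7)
The plan is to apply Kueker-Laskowski \cite[Theorem 1.5]{kuekerlaskowski} to the smooth class $(\mathcal{C}_{\bowtie}^{sp}, \subseteq)$, with induced subgraph inclusion playing the role of strong embedding. The hypotheses are easy to assemble from what has already been established: $\mathcal{C}_{\bowtie}^{sp}$ contains only countably many isomorphism types (finite graphs in a finite relational language); it has the joint embedding property because disjoint union preserves both bowtie-freeness and the special property; and it has (free) amalgamation by the preceding proposition. These are precisely what the Kueker-Laskowski theorem requires in order to produce a countable generic union-of-chain model, unique up to isomorphism.

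Concretely, I would construct $\mathcal{U}_{\bowtie}$ by a standard bookkeeping argument. Enumerate the isomorphism types in $\mathcal{C}_{\bowtie}^{sp}$, and also enumerate a list of all finitary amalgamation problems $(G_i, H)$ with $G_i \subseteq H \in \mathcal{C}_{\bowtie}^{sp}$ that arise during the construction. Dovetail these tasks to build $G_0 \subseteq G_1 \subseteq \cdots$ in $\mathcal{C}_{\bowtie}^{sp}$ such that (a) every isomorphism type in $\mathcal{C}_{\bowtie}^{sp}$ occurs as an induced subgraph of some $G_n$ (invoking joint embedding), and (b) every amalgamation task is eventually solved inside the chain (invoking free amalgamation). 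Setting $\mathcal{U}_{\bowtie} := \bigcup_{n} G_n$ gives (i) by construction and (ii) by (a).

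For (iii), I would run a back-and-forth. Given finite special $G, H \subseteq \mathcal{U}_{\bowtie}$ and an isomorphism $f : G \to H$, enumerate $\mathcal{U}_{\bowtie} = \{a_0, a_1, \ldots\}$ and extend $f$ alternately in the domain and range directions. To add a vertex $a$ to the current domain $G'$, choose $G_n$ in the chain containing $G' \cup \{a\}$; since $G_n$ is itself special, we may amalgamate $G_n$ with the current range $H'$ freely over $G'$ along $f^{-1}$, and step (b) of the construction ensures that this amalgam is realised inside $\mathcal{U}_{\bowtie}$, producing an extension of $f$ whose domain contains $a$. Iterating symmetrically yields an automorphism of $\mathcal{U}_{\bowtie}$ extending $f$. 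Uniqueness then follows by a parallel back-and-forth between two models satisfying (i)--(iii), using (ii) to embed a starting point and (iii) to extend.

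The main delicate point is that $\mathcal{C}_{\bowtie}^{sp}$ is not hereditary: the back-and-forth cannot be performed on arbitrary finite subsets of $\mathcal{U}_{\bowtie}$, because the induced subgraph on $G' \cup \{a\}$ may fail to be special and then is not an object of the smooth class over which we can amalgamate. The fix is the one indicated above: always enlarge $G' \cup \{a\}$ to the ambient $G_n$ in the fixed chain, which is special by construction, and amalgamate there. Cofinality of $\mathcal{C}_{\bowtie}^{sp}$ in $\mathcal{C}_{\bowtie}^0$ (Lemma \ref{specialcofinallemma}) guarantees in addition that no finite bowtie-free configuration inside $\mathcal{U}_{\bowtie}$ obstructs this enlargement, so the back-and-forth proceeds uniformly.
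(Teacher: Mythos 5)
Your proposal is correct and follows essentially the same route as the paper, which simply invokes Kueker--Laskowski \cite[Theorem 1.5]{kuekerlaskowski} on the smooth class $(\mathcal{C}_{\bowtie}^{sp},\subseteq)$ after establishing joint embedding (via disjoint unions) and free amalgamation; you merely unfold the standard generic-chain and back-and-forth details that the citation encapsulates, including the correct fix for non-hereditariness (enlarging to a member of the fixed chain rather than taking arbitrary induced subgraphs). The only small quibble is that cofinality of $\mathcal{C}_{\bowtie}^{sp}$ in $\mathcal{C}^{0}_{\bowtie}$ is not needed for the back-and-forth in (iii) --- the paper uses it afterwards, to conclude that $\mathcal{U}_{\bowtie}$ is existentially closed --- but this does not affect the correctness of your argument.
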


We know that $\mathcal{C}_{\bowtie}^{\ast}$ is cofinal in $\mathcal{C}^0_{\bowtie}$. Consequently, by  Kueker-Laskowski \cite[Lemma 2.4]{kuekerlaskowski}, $\mathcal{U}_{\bowtie}$ of Theorem \ref{universalbowtiefreegraph} above is an existentially closed model of the universal theory $T_{\bowtie}$, that is, $\mathcal{U}_{\bowtie}\in \mathcal{E}_{\bowtie}$. By Cherlin-Shelah-Shi \cite{cherlinshelahshi} the theory of existentially closed bowtie-free graphs is $\omega$-categorical. Therefore, the graph $\mathcal{U}_{\bowtie}$ is isomorphic to the $\omega$-categorical universal countable bowtie-free graph of Cherlin-Shelah-Shi introduced at the end of the previous section. 

We aim now to describe the algebraic closure of a finite induced subgraph of the universal bowtie-free graph $\mathcal{U}_{\bowtie}$. Recall that in \cite{cherlinshelahshi}, an edge in $\mathcal{U}_{\bowtie}$ is called a \textit{special edge} if it lies in two triangles of $\ \mathcal{U}_{\bowtie}$. It was shown in \cite[Proposition 1]{cherlinshelahshi} that: 
\begin{enumerate} [label=(\roman*), itemsep=0pt, topsep=0pt]
    \item Every triangle in $\mathcal{U}_{\bowtie}$ contains a special edge. 
    \item If a vertex $v\in \mathcal{U}_{\bowtie}$ lies in a triangle $R$, but not in a special edge of $R$, then $v$ lies in a unique triangle. 
    \item If a vertex $v\in \mathcal{U}_{\bowtie}$ lies in two special edges, then $v$ lies in some $Q\cong K_4$, and thus any triangle containing $v$ is contained in $Q$. 
\end{enumerate}
It was shown further that for a finite induced subgraph $A\subseteq \mathcal{U}_{\bowtie}$, we have that
\begin{equation} \label{acl}
\acl(A)=A \cup \bigcup\Big\{e \text{ is a special edge of } \mathcal{U}_{\bowtie} \mid e \text{ lies in a triangle } R \text{ with } R \cap A \neq \emptyset\Big\}. \tag{$\dagger$} 
\end{equation}
Observe that  \eqref{acl} implies Proposition \ref{acl4}. In \eqref{acl} and below, we think of an edge $e$ as the set of the two vertices forming the edge $e$.

As $\mathcal{U}_{\bowtie}$ is existentially closed, one can see that every vertex $v\in \mathcal{U}_{\bowtie}$ lies in some triangle. By $(\mathrm{i})$ and $(\mathrm{iii})$ every triangle $R$ in $\mathcal{U}_{\bowtie}$ either contains exactly one special edge or contains three special edges. In the former case, $(\mathrm{ii})$ implies that $R$ lies in a chimney. In the latter case, $R$ lies in some $K_4$. So to sum up, \textit{every vertex and every triangle in $\mathcal{U}_{\bowtie}$ lies in a chimney or a $K_4$}. Also note that in a chimney, there is only one special edge, namely the edge between the two base vertices. And in a $K_4$ all edges are special edges. 

Suppose that $v\in \mathcal{U}_{\bowtie}$. By the above $v$ could be one of three types: it belongs to a $K_4$, a tip vertex of a chimney, or a base vertex of a chimney. Owing to \eqref{acl} we have the following. If $v\in Q \cong K_4$, then $\acl(v)=Q$. Otherwise $v$ lies in a chimney. If $v$ is a tip vertex, then $\acl(v)$ is the unique triangle containing $v$. If $v$ is a base vertex, then $\acl(v)$ is the unique special edge containing $v$. Moreover, it follows from \eqref{acl} that the algebraic closure is \textit{disintegrated}, that is, the algebraic closure of a set is the union of the algebraic closure of its singletons. Therefore, for a finite $A\subseteq \mathcal{U}_{\bowtie}$ we have that $\acl(A)$ is either a base of a chimney, a triangle in a chimney, a special bowtie-free graph, or a union of sets of these types.

Herwig showed in \cite[Section 6]{herwig} that a class of finite structures which has the joint embedding property and the extension property for a single partial automorphism\footnote{For any $A$ in the class, and $p\in \Part(A)$, there is an extension $B\supseteq A$ in the class with an automorphism $f\in \Aut(B)$ extending $p$.} must have the amalgamation property. Since the class of all finite bowtie-free graphs does not have the amalgamation property (see above), the extension property for partial automorphisms\footnote{See \cite[Section 5.3]{macphersonsurvey}.} (EPPA) fails. Nevertheless, when we restrict ourselves to the class of special finite bowtie-free graphs EPPA is satisfied as shown by Evans-Hubi\v{c}ka-Ne\v{s}et\v{r}il \cite[Corollary 5.5]{evanshubickanesetril}.  In the proposition below we apply an argument of Ivanov \cite[Theorem 3.1]{ivanov} to obtain a special case of \cite[Corollary 5.5]{evanshubickanesetril}; we call such argument the `necklace argument'.

\begin{proposition}\label{necklaceargument}
Suppose that $G\in \mathcal{C}_{\bowtie}^{\ast}$ is a finite special bowtie-free graph, and let $p:U\to V$ be an isomorphism between special induced subgraphs of $G$. Then there is $K\in \mathcal{C}_{\bowtie}^{\ast}$ such that $G \subseteq K$ and $p$ extends to an automorphism of $K$.
\end{proposition}

\begin{proof}
By Proposition \ref{FAP}, $\mathcal{C}_{\bowtie}^{\ast}$ has the free amalgamation property. The idea of constructing the desired graph $K$ is to form a `necklace' whose beads are isomorphic copies of $G$, and in which the range of $p$ in one bead is amalgamated with the domain of $p$ in the consecutive bead as in \cite[Theorem 3.1]{ivanov}. Start with the triple $G_0:=G$, $U_0:=U$, and  $p_0:=p$. Let $G_1$, $U_1$, and $p_1$ be new isomorphic copies of $G_0$, $U_0$, and $p_0$, respectively. Here we mean that $G_1$ is a graph isomorphic to $G_0$ via an isomorphism $\delta:G_0\to G_1$ and the set of vertices of $G_1$ is disjoint from the set of vertices of $G_0$. Moreover, $\dom(p_1)=U_1=\delta(U_0)$ and $p_1(\delta(x))=\delta(p_0(x))$ for every $x\in U_0$.   Take the free amalgam $G_0 \cup G_1 \in \mathcal{C}_{\bowtie}^{\ast}$ of $G_0$ and $G_1$ identifying $p_0(U_0)$ with $U_1$. One can check that in $G_0 \cup G_1$, the maps $p_0$ and $p_1$ agree on $U_0\cap U_1$. To see this, suppose that $v_0\in \dom(p_0)\cap \range (p_0)$. Then there are $u_0\in \dom(p_0)$ and $w_0\in \range(p_0)$ such that $p_0(u_0)=v_0$ and $p_0(v_0)=w_0$. Let $u_1, v_1, w_1$ in $G_1$ be the copies of $u_0,v_0,w_0$, respectively. So $p_1(u_1)=v_1$ and $p_1(v_1)=w_1$. In the amalgam $G_0\cup G_1$, the vertex $v_0$ is identified with $u_1$ and their corresponding images, $w_0$ and $v_1$, are identified as well. So $p_0$ and $p_1$ agree on $v_0$ in $G_0 \cup G_1$. Now using the isomorphism between $G_0$ and $G_1$ we can extend the map $p_0 \cup p_1$ to a map $g_1:G_0 \to G_1$ in $\Part(G_0 \cup G_1)$. 

We now describe how to continue the construction. Suppose that the graph $G_0 \cup \cdots \cup G_{m-1}$ together with the partial isomorphism $g_{m-1}:G_0 \cup \cdots \cup G_{m-2} \to G_1 \cup \cdots \cup G_{m-1}$ extending $p_0\cup \cdots \cup p_{m-1}$  have been constructed. Let $G_m$, $U_m$, and $p_m$ be new isomorphic copies of $G_{m-1}$, $U_{m-1}$, and $p_{m-1}$, respectively. Form the free amalgam $G_0 \cup \cdots \cup G_{m-1} \cup G_m$ in $\mathcal{C}_{\bowtie}^{\ast}$ of $G_0 \cup \cdots \cup G_{m-1}$ and $G_m$ identifying $p_{m-1}(U_{m-1})$ with $U_m$. Next, using the isomorphism between $G_{m-1}$ and $G_m$, extend the map $g_{m-1}$ to a map $g_m:G_0 \cup \cdots \cup G_{m-1} \to G_1 \cup \cdots \cup G_m$ that belongs to $\Part(G_0 \cup \cdots \cup G_{m-1} \cup G_m)$.

Choose $n$ such that $n$ is a common multiple of all the lengths of complete cycles of the isomorphism $p$, and $n$ is strictly greater than the length of any partial cycle of $p$. Let $\bar{G}=G_0 \cup \cdots \cup G_n$ and let $g:=g_n:G_0 \cup \cdots \cup G_{n-1} \to G_1 \cup \cdots \cup G_n$ in $\Part(\bar{G})$ extending $p_0 \cup \cdots \cup p_n$.  At this point, half of the necklace has been constructed. By our choice of $n$ we have the following:
\begin{enumerate}[label=(\roman*), itemsep=0pt, topsep=0pt]
    \item $G_0$ is contained in the domain of $g^n$ ($n^{th}$ power), and
    \item $G_0 \cap G_n = G_1\cap G_n = \{a\in U_0 : g^k(a)=a \text{ for some } k>0\}$, and
    \item for any $a\in G_0 \cap G_n$ we have that $g^n(a)=a$.
\end{enumerate}  
Item $(\mathrm{ii})$ says that $G_0 \cap G_n$ contains exactly the points which are in complete cycles of $p$. 

\textbf{Claim.} The induced subgraph $G_0 \cup G_n \subseteq \bar{G}$ is special. 

\textbf{Proof of the claim.} By construction of $\bar{G}$, we have that  $G_0 \cup G_n$ is the free amalgam of $G_0$ and $G_n$ over their intersection $G_0\cap G_n$. By Proposition \ref{FAP} and since both $G_0, G_n \in \mathcal{C}_{\bowtie}^{\ast}$, in order to show that $G_0 \cup G_n$ is special it is enough to show that $G_0\cap G_n$ is special. By item $(\mathrm{ii})$ we have that $v\in G_0 \cap G_n$ if and only if $v$ belongs to a complete cycle of $p$. Fix some $v\in G_0 \cap G_n$, then there is a complete $k$-cycle, say $(v=v_0, v_1, v_2, \ldots, v_{k-1})$ for some $k<\omega$ such that $v_i=p^i(v)$ for each $0\leq i < k$, $v=p^k(v)$, and $v_i$'s are distinct. Notice that $\{v_0, v_1, \ldots, v_{k-1}\}\subseteq U =\dom(p)$. As $v_0\in U$ and $U\in \mathcal{C}_{\bowtie}^{\ast}$, it follows that $v_0$ belongs to an induced subgraph $Q_0$ entirely contained in $U$ where $Q_0$ is either a $K_4$ or a chimney. Since $\range(p)=V$ and $V \in \mathcal{C}_{\bowtie}^{\ast}$ as well, there are (not necessarily distinct) copies $Q_1, \ldots, Q_{k-1}$ of $Q_0$ such that $v_i \in Q_i$, and each $Q_i \subseteq U$, and $p(Q_i)=Q_{i+1}$ where addition is performed modulo $k$. This means all vertices in $Q_0\cup Q_1\cup\cdots\cup Q_{k-1}$ are in complete cycles of $p$ and so they belong to $G_0\cap G_n$. So $Q_0 \subseteq G_0 \cap G_n$. So every vertex in $G_0 \cap G_n$ either belongs to a $K_4$ or a chimney which is contained in $G_0 \cap G_n$. Thus $G_0 \cap G_n$ is a special bowtie-free graph, and so is $G_0\cup G_n$, establishing the claim.  \hfill $\square$

Next we take a new isomorphic copy $\bar{H}=H_0 \cup H_1 \cup \ldots \cup H_n$ of the graph $\bar{G}$ and let $\gamma: \bar{G} \to \bar{H}$ be the induced isomorphism. Let the map  $h:H_0 \cup H_1 \cup \ldots \cup H_{n-1} \to H_1 \cup H_2 \cup \ldots \cup H_n$ be the corresponding copy of $g$. Here $\bar{H}$ is the other half of the necklace. Let $\beta:=g^n\restriction_{G_0}:G_0 \to G_n$ be the isomorphism induced by $g^n$. Using $\beta$ and the isomorphism $\gamma$, construct the free amalgam $K\in \mathcal{C}_{\bowtie}^{\ast}$ of $\bar{G}$ and $\bar{H}$ over $G_0 \cup G_n$ where $G_0$ is identified with $H_n$, and $G_n$ is identified with $H_0$ via the map $\gamma \circ (\beta\cup \beta^{-1}):G_0\cup G_n\to H_n\cup H_0$. Let $f:=g\cup h$. Items $(\mathrm{ii})$ and $(\mathrm{iii})$ guarantee that, under this identification, the restriction of $g$ to $G_0 \cup G_n$ agrees with the restriction of $h$ to $H_0 \cup H_n$. So $f$ is a well-defined map, and moreover, $f$ is a permutation of $K$. Finally, as $g\in \Part(\bar{G})$ and $h\in \Part(\bar{H})$ agree on $\dom(g) \cap \dom(h)$ in $K$, and $K$ is a free amalgam of $\bar{G}$ and $\bar{H}$, we have that  $f=g\cup h \in \Aut(K)$, and clearly $f$ extends $p$.    
\end{proof}

\begin{theorem}\label{universalbowtiefreegraphgenauto}
The universal bowtie-free graph $\mathcal{U}_{\bowtie}$ admits generic automorphisms.
\end{theorem}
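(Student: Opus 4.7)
The plan is to apply the Kechris--Rosendal criterion stated in the introduction, working with the cofinal subclass $\mathcal{C}_{\bowtie}^{sp}$ (for which the Fra\"{i}ss\'{e}-type Theorem \ref{universalbowtiefreegraph} holds) rather than the full age $\mathcal{C}^0_{\bowtie}$. Concretely, I would verify that the class of $1$-systems $(A,p)$ with $A\in\mathcal{C}_{\bowtie}^{sp}$ and $\dom(p),\range(p)\in\mathcal{C}_{\bowtie}^{sp}$ has the joint embedding property and the weak amalgamation property; an arbitrary finite partial automorphism of $\mathcal{U}_{\bowtie}$ can be pre-processed into this form using Lemma \ref{specialcofinallemma} together with the description of algebraic closure in $\mathcal{U}_{\bowtie}$ given around \eqref{acl}.

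Joint embedding is immediate via disjoint unions: $\mathcal{C}_{\bowtie}^{sp}$ is closed under disjoint union (each vertex keeps its $K_4$ or chimney witness from its own component), and two partial automorphisms on disjoint graphs combine trivially into a partial automorphism of the union.

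The main step is weak amalgamation. Given a $1$-system $(A,p)$ as above, I would invoke the necklace argument (Proposition \ref{necklaceargument}) to obtain $(K,f)$ with $K\in\mathcal{C}_{\bowtie}^{sp}$, $A\subseteq K$, and $f\in\Aut(K)$ a \emph{total} automorphism of $K$ extending $p$. The claim is that $(K,f)$ is an amalgamation base. Given two $1$-system extensions $(C_1,r_1),(C_2,r_2)$ of $(K,f)$, form the free amalgam $D=C_1\cup_K C_2$, which lies in $\mathcal{C}_{\bowtie}^{sp}$ by the free amalgamation property proved earlier. Set $s:=r_1\cup r_2$. Since $\dom(r_1)\cap\dom(r_2)\subseteq C_1\cap C_2 = K$ and $r_1\restriction_K = r_2\restriction_K = f$, the map $s$ is a well-defined injection, and its domain and range are themselves free amalgams of special bowtie-free graphs over $K$, hence lie in $\mathcal{C}_{\bowtie}^{sp}$. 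Because no new edges cross $K$ in the free amalgam, $s$ preserves adjacency on each side, so $s\in\Part(D)$ and $(D,s)$ witnesses the weak amalgamation of $(C_1,r_1)$ and $(C_2,r_2)$ over $(K,f)$.

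The hard part is WAP, and the crucial input is the ability to extend $(A,p)$ to $(K,f)$ with $f$ a \emph{total} automorphism of a special bowtie-free graph --- this is precisely what the necklace argument provides, and is why it is worthwhile to restrict to the cofinal subclass $\mathcal{C}_{\bowtie}^{sp}$. Once that extension is available, amalgamation over $(K,f)$ reduces to free amalgamation over a structure rigid enough to carry the partial automorphism, and the verification boils down to the free amalgamation property already established.
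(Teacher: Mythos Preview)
Your approach is correct and matches the paper's in all essentials: both proofs use the necklace argument (Proposition \ref{necklaceargument}) to extend a given $1$-system to one carrying a \emph{total} automorphism of a special bowtie-free graph, then free-amalgamate over that base to verify WAP, and handle JEP via disjoint unions.

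The one difference is packaging. The paper first passes to the Morleyisation $\widetilde{\mathcal{U}}_{\bowtie}$ so as to be working with a genuine Fra\"{i}ss\'{e} class $\Age(\widetilde{\mathcal{U}}_{\bowtie})$ to which the Kechris--Rosendal criterion applies verbatim; only inside that framework does it reduce (via algebraic closure, exactly as you indicate) to special bowtie-free graphs. You instead work directly with the smooth class $\mathcal{C}_{\bowtie}^{sp}$. This is arguably cleaner, but you should make explicit why checking WAP/JEP on the cofinal family of ``special'' $1$-systems suffices: the pre-processing step shows that these systems index a neighbourhood base in $\Aut(\mathcal{U}_{\bowtie})$, and in the WAP verification an \emph{arbitrary} extension $(C_i,r_i)$ of $(K,f)$ must first be enlarged to one with $C_i\in\mathcal{C}_{\bowtie}^{sp}$ and $\dom(r_i),\range(r_i)\in\mathcal{C}_{\bowtie}^{sp}$ (which is possible, still over $(K,f)$, since $K$ is already special and algebraically closed) before the free amalgam is formed. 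The paper's use of Morleyisation is just an alternative way of formalising this same reduction.
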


\begin{proof}
We want to show that $\Aut(\mathcal{U}_{\bowtie})$ contains a comeagre conjugacy class via the Kechris-Rosendal characterisation \cite[Theorem 3.4]{kechrisrosendal}. To do so we pass to the Morleyisation $\ \widetilde{\mathcal{U}}_{\bowtie}$ of $\ \mathcal{U}_{\bowtie}$. Here $\ \widetilde{\mathcal{U}}_{\bowtie}$ is an expansion of $\ \mathcal{U}_{\bowtie}$ in the language  $\widetilde{\mathcal{L}}=\{R_\phi : \phi \text{ is an }  \mathcal{L}\text{-formula}\}$ where $\mathcal{L}$ is the language of graphs, and $R_\phi$ is a relation symbol of arity equal to the number of free variables in $\phi$. Moreover, the new relation symbols are interpreted as: $\ \widetilde{\mathcal{U}}_{\bowtie} \models R_\phi(\bar{a})$ if and only if $\ \mathcal{U}_{\bowtie} \models \phi(\bar{a})$ for all $\bar{a}\in \mathcal{U}_{\bowtie}$.  Thus, by \cite[Proposition 3.1.6]{macphersonsurvey} we have that $\ \widetilde{\mathcal{U}}_{\bowtie}$ is a homogeneous $\widetilde{\mathcal{L}}$-structure. 

We now show that the class of $1$-systems over the amalgamation class $\Age(\widetilde{\mathcal{U}}_{\bowtie})$ has the weak amalgamation property. So let $A\in \Age(\widetilde{\mathcal{U}}_{\bowtie})$ and $(p:U\to V) \in \Part(A)$. We may assume that $A\subseteq \widetilde{\mathcal{U}}_{\bowtie}$. 
By homogeneity of $\ \widetilde{\mathcal{U}}_{\bowtie}$, the partial automorphism $p$ extends to some $f\in\Aut(\widetilde{\mathcal{U}}_{\bowtie})$. 
Let $\bar{A}=\acl_{\widetilde{\mathcal{U}}_{\bowtie}}(A)$, $\bar{U}=\acl_{\widetilde{\mathcal{U}}_{\bowtie}}(U)$, and $\bar{V}=\acl_{\widetilde{\mathcal{U}}_{\bowtie}}(V)$. Note that $\bar{U}, \bar{V} \subseteq \bar{A}$. 
We may assume, after first increasing the universe of $\bar{A}$ slightly if necessary, that the reducts of $\bar{A}, \bar{U}, \bar{V}$ to $\mathcal{L}$ are special bowtie-free graphs. 
Moreover, the restriction of $f$ on $\bar{U}$ gives a partial automorphism $(\bar{p}:\bar{U} \to \bar{V}) \in \Part(\bar{A})$. By applying Proposition \ref{necklaceargument} to the graph reduct of $\bar{A}$ and $\bar{p}\in \Part(\bar{A})$, we obtain a special bowtie-free graph $K$ with $g\in \Aut(K)$ such that $\bar{A}\restriction_{\mathcal{L}}\subseteq K$ and $\bar{p}\subseteq g$. By Theorem \ref{universalbowtiefreegraph}(ii), $K$ can be chosen so that $K$ is a substructure of $\mathcal{U}_{\bowtie}$. Let $\bar{K}\in \Age(\widetilde{\mathcal{U}}_{\bowtie})$ be the expansion of $K$ to $\widetilde{\mathcal{L}}$, that is, equip $K$  with the induced structure when it is viewed as a subset of $\widetilde{\mathcal{U}}_{\bowtie}$.  

Now suppose that $\langle \bar{B}_1, h_1 \rangle$ and $\langle \bar{B}_2, h_2 \rangle$ are two $1$-systems over $\Age(\widetilde{\mathcal{U}}_{\bowtie})$ extending $\langle \bar{K}, g \rangle$. We may assume that the reducts $B_1, B_2$ of  $\bar{B}_1, \bar{B}_2$, respectively, to $\mathcal{L}$ are special bowtie-free graphs, and by Proposition \ref{necklaceargument} we may assume that $h_1 \in \Aut(B_1)$ and $h_2 \in \Aut(B_2)$. Let $C$ be the free amalgam of $B_1$ and $B_2$ over $K$, which is also a bowtie-free graph. So $C\in \Age(\mathcal{U}_{\bowtie})$ by Theorem \ref{universalbowtiefreegraph}(ii) and thus we may assume that $C$ is a substructure of $\mathcal{U}_{\bowtie}$. Let $\bar{C}\in \Age(\widetilde{\mathcal{U}}_{\bowtie})$ be the expansion of $C$ to $\widetilde{\mathcal{L}}$, that is, equip $C$  with the induced structure from $\widetilde{\mathcal{U}}_{\bowtie}$. Then the $1$-system $\langle \bar{C}, h_1 \cup h_2 \rangle$ amalgamates $\langle \bar{B}_1, h_1 \rangle$ and $\langle \bar{B}_2, h_2 \rangle$ over $\langle \bar{K}, g \rangle$, and so over $\langle A, p \rangle$. Therefore, the class of all $1$-systems over $\Age(\widetilde{\mathcal{U}}_{\bowtie})$ has the weak amalgamation property.

We now show that the class of all $1$-systems over $\Age(\widetilde{\mathcal{U}}_{\bowtie})$ has the joint embedding property. Let $\langle A, f \rangle$ and $\langle B, g \rangle$ be two $1$-systems over $\Age(\widetilde{\mathcal{U}}_{\bowtie})$. We may assume that the reducts of $A$ and $B$ to $\mathcal{L}$ are special bowtie-free graphs. Now let $C$ be the disjoint union of $A\restriction_{\mathcal{L}}$ and $B\restriction_{\mathcal{L}}$, and notice that $C$ is a special bowtie-graph.  By Theorem \ref{universalbowtiefreegraph}(ii), we may choose $C$ to be a substructure of $\mathcal{U}_{\bowtie}$. Let $\bar{C}$ be the expansion of $C$ to $\widetilde{\mathcal{L}}$. It follows that $\bar{C}\in \Age(\widetilde{\mathcal{U}}_{\bowtie})$ and, moreover, the 1-system $\langle \bar{C}, f\cup h \rangle$ embeds both $\langle A, f \rangle$ and $\langle B, g \rangle$ as desired. Therefore, by \cite[Theorem 3.4]{kechrisrosendal}, the automorphism group $\Aut(\widetilde{\mathcal{U}}_{\bowtie})=\Aut(\mathcal{U}_{\bowtie})$ contains a comeagre conjugacy class. That is, the universal bowtie-free graph $\mathcal{U}_{\bowtie}$ has generic automorphisms.      
\end{proof}

In the first paragraph of the proof above we passed to a homogeneous expansion of $\mathcal{U}_{\bowtie}$ using the Morleyisation technique. To do so we expanded the language of graphs to an infinite relational language. We conclude this article by showing that adding finitely many relation symbols is not sufficient to make the universal bowtie-free graph homogeneous. On this subject, see also the remark in \cite{hubickanesetril} on page 295.
 
\begin{definition}\label{finitehomog}\rm \cite[Definition 1.6]{covington} Let $\mathcal{L}$ be a finite relational language, and $M$ be a countably infinite $\mathcal{L}$-structure. We say that $M$ is \textit{finitely homogenisable} if there is a finite relational language $\tilde{\mathcal{L}} \supseteq \mathcal{L}$ and an $\tilde{\mathcal{L}}$-structure $\tilde{M}$ such that $\tilde{M}$ is an expansion of $M$, and $\tilde{M}$ is homogeneous, and $\Aut(M)=\Aut(\tilde{M})$.   
\end{definition}

\begin{remark}\label{subtuple}\rm 
 Let $\mathcal{L}$ be a finite relational language with maximum arity $k$, and $\bar{a}, \bar{b}$ be finite $\mathcal{L}$-structures of same size. Then if every $k$-subtuple of $\bar{a}$ is isomorphic to its corresponding $k$-subtuple of $\bar{b}$, then $\bar{a}$ is isomorphic to $\bar{b}$. 
 \end{remark}

\begin{figure}[ht]
\centering
\begin{tikzpicture}
\tikzset{Bullet/.style={circle,draw,fill=black,scale=0.4}}
\node[Bullet, label=below:{$a_1$}](a) at(0,0){};
\node[Bullet, label=above:{$b_1$}](b) at(1.5,1){};
\node[Bullet](c) at(0.5,1.8){};
\node[Bullet, label=right:{$t_1$}](d) at(0.2,2.8){};

\node[Bullet, label=below:{$a_2$}](a1) at(2,0){};
\node[Bullet, label=below:{$b_2$}](b1) at(3.5,1){};
\node[Bullet](c1) at(2.7,1.6){};
\node[Bullet, label=right:{$t_2$}](d1) at(2.5,2.8){};

\node[Bullet, label=below:{$a_3$}](a2) at(4,0){};
\node[Bullet, label=below:{$b_3$}](b2) at(5.5,1){};
\node[Bullet](c2) at(4.7,1.8){};
\node[Bullet, label=right:{$t_3$}](d2) at(4.5,3){};

\node[Bullet, label=below:{$a_4$}](a3) at(7,1.2){};
\node[Bullet, label=right:{$b_4$}](b3) at(8,2.2){};
\node[Bullet](c3) at(7.3,3){};
\node[Bullet, label=right:{$t_4$}](d3) at(7,4){};

\node[Bullet, label=right:{$\hat{a}$}](u) at(7.5,-1.3){};
\node[Bullet, label=right:{$\hat{b}$}](v) at(8.7,-.5){};
\node[Bullet](x) at(8.3,0.3){};
\node[Bullet, label=right:{$\hat{t}$}](y) at(8.3,1){};

\draw[thick] (a)--(b)--(c)--(a)--(d)--(b) {};
\draw[thick] (a1)--(b1)--(c1)--(a1)--(d1)--(b1) {};
\draw[thick] (a2)--(b2)--(c2)--(a2)--(d2)--(b2) {};
\draw[thick] (a3)--(b3)--(c3)--(a3)--(d3)--(b3) {};
\draw[thick] (u)--(v)--(x)--(u)--(y)--(v) {};
\draw (a)--(a1)--(a2)edge[bend right=10](a3) {};
\draw (b)--(b1)--(b2)--(b3) {};
\draw (a2)--(u) {};
\draw (b2)--(v) {};
\draw (a3)edge[bend right=20](a){};
\draw (a)edge[bend right=20](v){};
\draw (b3)edge[bend right=30](b){};
\draw (b)edge[bend right=20](u){};
\end{tikzpicture}

\caption{The special bowtie-free graph constructed in Lemma \ref{notfinhomog} for $k=3$.}\label{specialbowtiegraph3} 
\end{figure}

The argument below is based on an example by Cherlin-Lachlan \cite[p. 819]{cherlinlachlan}.

\begin{lemma}\label{notfinhomog}
The universal bowtie-free graph $\mathcal{U}_{\bowtie}$ is not finitely homogenisable. 
\end{lemma}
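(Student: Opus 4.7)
The plan is to derive a contradiction from the assumption that $\mathcal{U}_{\bowtie}$ is finitely homogenisable. So suppose $\tilde{\mathcal{L}}\supseteq\{E\}$ is a finite relational language of maximum arity $k$ and $\widetilde{M}$ is a homogeneous expansion of $\mathcal{U}_{\bowtie}$ with $\Aut(\widetilde{M})=\Aut(\mathcal{U}_{\bowtie})$. First I will combine homogeneity of $\widetilde{M}$ with the Remark preceding this lemma to obtain the following equivalence: two $n$-tuples $\bar{a},\bar{b}$ in $\mathcal{U}_{\bowtie}$ lie in the same $\Aut(\mathcal{U}_{\bowtie})$-orbit if and only if, for every $k$-subset $I\subseteq\{1,\ldots,n\}$, the corresponding $k$-subtuples $\bar{a}\restriction_I$ and $\bar{b}\restriction_I$ lie in the same orbit. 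Indeed, homogeneity says that $\Aut(\widetilde{M})$-orbits coincide with quantifier-free $\tilde{\mathcal{L}}$-types, while the Remark reduces the quantifier-free $\tilde{\mathcal{L}}$-type of an $n$-tuple to the types of its $k$-subtuples.

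The second step is to violate this equivalence by exhibiting, for the given $k$, an integer $n>k$ and two $n$-tuples $\bar{a},\bar{b}\in\mathcal{U}_{\bowtie}^{n}$ such that every corresponding pair of $k$-subtuples lies in the same $\Aut(\mathcal{U}_{\bowtie})$-orbit, while $\bar{a}$ and $\bar{b}$ themselves lie in distinct orbits. The construction is adapted from an example of Cherlin-Lachlan \cite[p.~819]{cherlinlachlan} and exploits the rich symmetries of $\mathcal{U}_{\bowtie}$: each chimney carries infinitely many tips; tips of a chimney are permuted freely by automorphisms; distinct chimneys and copies of $K_4$ are interchangeable; and non-triangle edges between distinct components can be imposed or omitted subject only to bowtie-freeness. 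Using these symmetries, $\bar{a}$ and $\bar{b}$ will be designed so that their local (i.e.\ $k$-subset) structures are matched by automorphisms of $\mathcal{U}_{\bowtie}$, but a single global feature involving all $n$ coordinates at once---such as a cyclic identification among the chimneys that the coordinates inhabit, or a specific global incidence of non-triangle edges spanning $n$ components---keeps the full $n$-tuple orbits distinct.

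The main obstacle will be to pin down an explicit $n$-ary global invariant which is truly invisible at arity $k$: one must verify both the local matching of every $k$-subtuple (by producing, for each $k$-subset, an automorphism of $\mathcal{U}_{\bowtie}$ carrying one subtuple to the other) and the global non-matching of the entire $n$-tuple (by showing that no single automorphism can reconcile the global feature). Following the Cherlin-Lachlan paradigm, this is achieved by arranging $n$ disjoint "gadgets"---here, chimneys and/or copies of $K_4$ supplied by the structure of $\mathcal{U}_{\bowtie}$ described in Section \ref{CSSsection}---whose inter-gadget bindings encode an $n$-ary datum not reducible to any $k$-ary relation among the coordinates. Once such a pair $(\bar{a},\bar{b})$ is produced, the equivalence from the first paragraph is violated, yielding the desired contradiction and establishing that $\mathcal{U}_{\bowtie}$ is not finitely homogenisable.
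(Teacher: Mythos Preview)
Your strategy is exactly the paper's: combine homogeneity of $\widetilde{M}$ with the Remark to reduce to ``$\Aut(\mathcal{U}_{\bowtie})$-orbits of $n$-tuples are determined by the orbits of their $k$-subtuples,'' and then violate this with two explicit $(k{+}1)$-tuples built out of chimneys in the spirit of Cherlin--Lachlan. But your proposal stops at the plan; the construction you defer is the entire content of the proof, and nothing you have written yet pins down the promised $n$-ary invariant or verifies either the local matching or the global non-matching.

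For comparison, the paper's construction is completely explicit. Take $k{+}1$ disjoint height-$2$ chimneys $H_1,\dots,H_{k+1}\subseteq\mathcal{U}_{\bowtie}$ with bases $\{a_i,b_i\}$, and add non-triangle edges so that $a_1a_2\cdots a_{k+1}a_1$ and $b_1b_2\cdots b_{k+1}b_1$ are two parallel $(k{+}1)$-cycles. Take one further chimney $\hat H_{k+1}$ with base $\{\hat a_{k+1},\hat b_{k+1}\}$, attached instead by the \emph{twisted} edges $a_k\hat a_{k+1}$, $\hat a_{k+1}b_1$ and $b_k\hat b_{k+1}$, $\hat b_{k+1}a_1$. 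With $t_i$ a tip of $H_i$ and $\hat t_{k+1}$ a tip of $\hat H_{k+1}$, set $\bar u=(t_1,\dots,t_{k+1})$ and $\bar v=(t_1,\dots,t_k,\hat t_{k+1})$. Dropping any one coordinate breaks the cycle, and the two resulting configurations of $k$ chimneys are special bowtie-free graphs that are visibly isomorphic, so Theorem~\ref{universalbowtiefreegraph}(iii) supplies the required automorphism matching each pair of $k$-subtuples. On the other hand, any automorphism with $h(\bar u)=\bar v$ may be assumed to fix the bases of $H_1,\dots,H_k$ pointwise and must carry the base of $H_{k+1}$ to that of $\hat H_{k+1}$; both possible orientations then contradict the edges to $a_1,b_1,a_k,b_k$. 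This ``parallel cycle versus M\"obius-twisted cycle'' of bases is precisely the $(k{+}1)$-ary invariant, invisible at arity $k$, that your outline promises but does not supply.
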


\begin{proof}
For the sake of a contradiction suppose that $\mathcal{U}_{\bowtie}$ is finitely homogenisable. Let $\mathcal{L}=\{E\}$ be the language of graphs, and $\tilde{\mathcal{L}}$ be the finite relational language of a homogeneous expansion $\tilde{\mathcal{U}}_{\bowtie}$ of $\mathcal{U}_{\bowtie}$. Let $k$ be the maximum arity of the symbols in $\tilde{\mathcal{L}}$. 
For each $1\leq i \leq k+1$ take a distinct chimney $H_i$ of height 2 with base vertices $\{a_i, b_i\}$ and some tip vertex $t_i$ of $H_i$, so $t_ia_ib_i$ is a triangle in $H_i$.  Construct a special bowtie-free graph by taking the disjoint union of the $H_i$'s and then adding the following edges:
$$\big\{\{a_i,a_{i+1}\}\mid 1\leq i \leq k\}\big\} \cup \big\{ \{b_i,b_{i+1}\}\mid 1\leq i \leq k\}\big\} \cup \big\{\{a_1,a_{k+1}\}, \{b_1,b_{k+1}\} \big\}.$$
Next, we add one more new chimney $\hat{H}$ of height 2 with base vertices $\{\hat{a}, \hat{b}\}$ and a tip vertex $\hat{t}$ together with the following edges:
$$\big\{\{a_k, \hat{a}\}, \{b_k, \hat{b}\}, \{a_1, \hat{b}\}, \{b_1, \hat{a}\} \big\}.$$
The resulting graph (see Figure \ref{specialbowtiegraph3}) is a special bowtie-free graph and so it embeds in $\mathcal{U}_{\bowtie}$. Consider the two $(k+1)$-tuples $\bar{u}=(t_1, t_2, \ldots, t_k, t_{k+1})$ and  $\bar{v}=(t_1, t_2, \ldots, t_k, \hat{t}\,)$. 
For every $I \subseteq \{1, 2, \ldots, k\}$ with $|I|=k-1$, one can see that there is a finite partial $\mathcal{L}$-isomorphism $f: \bigcup\limits_{i \in I} H_i \cup H_{k+1} \to \bigcup\limits_{i \in I} H_i \cup \hat{H}$ such that $f(t_i)=t_i$ for $i\in I$ and $f(t_{k+1})=\hat{t}$. 
As the domain and range of $f$ are special bowtie-free graphs, by Theorem \ref{universalbowtiefreegraph}(iii), there is $\tilde{f}\in \Aut(\mathcal{U}_{\bowtie})=\Aut(\tilde{\mathcal{U}}_{\bowtie})$ extending $f$. 
Thus, every $k$-subtuple of $\bar{u}$ is $\tilde{\mathcal{L}}$-isomorphic to its corresponding subtuple of $\bar{v}$. By Remark \ref{subtuple}, the tuples $\bar{u}$ and $\bar{v}$ are $\tilde{\mathcal{L}}$-isomorphic. 
By homogeneity of $\tilde{\mathcal{U}}_{\bowtie}$ there is some $h \in \Aut(\tilde{\mathcal{U}}_{\bowtie})$ such that $h(\bar{u})=\bar{v}$. Since $\tilde{\mathcal{U}}_{\bowtie}$ is bowtie-free and $h(t_i)=t_i$ for each $1\leq i\leq k$, we have either $h(a_i,b_i)=(a_i,b_i)$ for all $1\leq i\leq k$  or  $h(a_i,b_i)=(b_i,a_i)$ for all $1\leq i\leq k$. Suppose, without loss of generality, that $h$ fixes pointwise the base vertices of each $H_i$ for $1\leq i \leq k$. As $h(t_{k+1})=\hat{t}$, it must be that $h$ sends the base of $H_{k+1}$ to the base of $\hat{H}$, but each of the options $h(a_{k+1}, b_{k+1})=(\hat{a}, \hat{b})$ or $h(a_{k+1}, b_{k+1})=(\hat{b}, \hat{a})$ gives rise to a contradiction. To see this, suppose that $h(a_{k+1})=\hat{a}$ and $h(b_{k+1})=\hat{b}$. Since $h$ is an automorphism and $a_1$ and $a_{k+1}$ are adjacent, it must be that $h(a_1)$ and $h(a_{k+1})$ are adjacent too, and so we conclude that $a_1$ and $\hat{a}$ are adjacent, which is not the case. For the other case, suppose that $h(a_{k+1})=\hat{b}$ and $h(b_{k+1})=\hat{a}$. Since $a_k$ and $a_{k+1}$ are adjacent, it follows that $h(a_k)$ and $h(a_{k+1})$ are adjacent too, and so $a_k$ and $\hat{b}$ are adjacent, which is, again, not the case.        
\end{proof}

\textbf{Acknowledgements.} The author is extremely thankful to Dugald Macpherson for his support and insightful suggestions. He is also grateful to Rehana Patel and Jan Hubička for their beneficial and exciting conversations on bowtie-free graphs. Finally, the author would like to thank the referee for valuable comments and suggestions that improved the presentation of this work.

\bibliographystyle{abbrv}
\bibliography{DNSreferences}

\begin{thebibliography}{10}

\bibitem{changkeisler}
C.~C. Chang and H.~J. Keisler.
\newblock {\em Model theory}.
\newblock Studies in Logic and the Foundations of Mathematics, volume 73.
  North-Holland, third edition, 1990.

\bibitem{cherlinkomjath}
G.~Cherlin and P.~Komj{\'a}th.
\newblock There is no universal countable pentagon-free graph.
\newblock {\em Journal of Graph Theory}, 18(4):337--341, 1994.

\bibitem{cherlinlachlan}
G.~Cherlin and A.~H. Lachlan.
\newblock Stable finitely homogeneous structures.
\newblock {\em Transactions of the American Mathematical Society},
  296(2):815--850, 1986.

\bibitem{cherlinshelahshi}
G.~Cherlin, S.~Shelah, and N.~Shi.
\newblock Universal graphs with forbidden subgraphs and algebraic closure.
\newblock {\em Advances in Applied Mathematics}, 22(4):454--491, 1999.

\bibitem{cherlinshi}
G.~Cherlin and N.~Shi.
\newblock Graphs omitting a finite set of cycles.
\newblock {\em Journal of Graph Theory}, 21(3):351--355, 1996.

\bibitem{cherlintallgren}
G.~Cherlin and L.~Tallgren.
\newblock Universal graphs with a forbidden near-path or 2-bouquet.
\newblock {\em Journal of Graph Theory}, 56(1):41--63, 2007.

\bibitem{covington}
J.~Covington.
\newblock Homogenizable relational structures.
\newblock {\em Illinois Journal of Mathematics}, 34(4):731--743, 1990.

\bibitem{evanshubickanesetril}
D.~M. Evans, J.~Hubi\v{c}ka, and J.~Ne{\v{s}}et{\v{r}}il.
\newblock Ramsey properties and extending partial automorphisms for classes of
  finite structures.
\newblock {\em Fundamenta Mathematicae}, 253(2):121--153, 2021.

\bibitem{furedikomjath}
Z.~F{\"u}redi and P.~Komj{\'a}th.
\newblock On the existence of countable universal graphs.
\newblock {\em Journal of Graph Theory}, 25(1):53--58, 1997.

\bibitem{herwig}
B.~Herwig.
\newblock Extending partial isomorphisms for the small index property of many
  $\omega$-categorical structures.
\newblock {\em Israel Journal of Mathematics}, 107(1):93--123, 1998.

\bibitem{hirschfeldwheeler}
J.~Hirschfeld and W.~H. Wheeler.
\newblock {\em Forcing, arithmetic, division rings}.
\newblock Lecture Notes in Mathematics, volume 454. Springer, 2006.

\bibitem{hubickanesetril}
J.~Hubi\v{c}ka and J.~Nešetřil.
\newblock Bowtie-free graphs have a {R}amsey lift.
\newblock {\em Advances in Applied Mathematics}, 96:286--311, 2018.

\bibitem{ivanov}
A.~Ivanov.
\newblock Automorphisms of homogeneous structures.
\newblock {\em Notre Dame Journal of Formal Logic}, 46(4):419--424, 2005.

\bibitem{kechrisrosendal}
A.~S. Kechris and C.~Rosendal.
\newblock Turbulence, amalgamation, and generic automorphisms of homogeneous
  structures.
\newblock {\em Proceedings of the London Mathematical Society}, 94(2):302--350,
  2007.

\bibitem{komjath}
P.~Komj{\'a}th.
\newblock Some remarks on universal graphs.
\newblock {\em Discrete mathematics}, 199(1-3):259--265, 1999.

\bibitem{kuekerlaskowski}
D.~W. Kueker, M.~C. Laskowski, et~al.
\newblock On generic structures.
\newblock {\em Notre Dame Journal of Formal Logic}, 33(2):175--183, 1992.

\bibitem{macphersonsurvey}
D.~Macpherson.
\newblock A survey of homogeneous structures.
\newblock {\em Discrete Mathematics}, 311(15):1599--1634, 2011.

\bibitem{macphersonthomas}
D.~Macpherson and S.~Thomas.
\newblock Comeagre conjugacy classes and free products with amalgamation.
\newblock {\em Discrete Mathematics}, 291(1):135--142, 2005.

\bibitem{marcjatoffalori}
A.~Marcja and C.~Toffalori.
\newblock {\em A guide to classical and modern model theory}.
\newblock Trends in Logic---Studia Logica Library, volume 19. Kluwer Academic
  Publishers, 2003.

\bibitem{rado}
R.~Rado.
\newblock Universal graphs and universal functions.
\newblock {\em Acta Arithmetica}, 4(9):331--340, 1964.

\bibitem{truss}
J.~K. Truss.
\newblock Generic automorphisms of homogeneous structures.
\newblock {\em Proceedings of the London Mathematical Society}, 3(1):121--141,
  1992.

\end{thebibliography}

\end{document}